\documentclass{amsart}
\usepackage{amsfonts}
\usepackage{amssymb}
\usepackage{amsmath,amsxtra,amssymb,mathrsfs,amscd}
\numberwithin{equation}{section}
\usepackage[all]{xy}
\usepackage{bbm} 

\newtheorem{thm}{\textbf{Theorem}}[section]
\newtheorem{prop}[thm]{\textbf{Proposition}}
\newtheorem{coro}[thm]{\textbf{Corollary}}
\newtheorem{lemma}[thm]{\textbf{Lemma}}


\theoremstyle{definition}

\theoremstyle{definition}
\newtheorem{defi}[thm]{\textbf{Definition}}

\theoremstyle{definition}
\newtheorem{remark}[thm]{\textbf{Remark}}

\theoremstyle{definition}

\newcommand{\injtp}{\overset{~\vee_p}{\otimes}}
\newcommand{\projtp}{\overset{~\wedge_p}{\otimes}}

\newcommand{\cbp}{\mathcal{CB}_p}
\newcommand{\ptp}{\overset{\wedge_p}{\otimes}}
\newcommand{\itp}{\overset{\vee_p}{\otimes}}

\newcommand{\llangle}{\langle\!\langle}
\newcommand{\rrangle}{\rangle\!\rangle}

\def\proclaim #1. #2\par{\medbreak
\noindent{\bf#1.\enspace}{\sl#2}\par\medbreak}
\linespread{1.4}

\allowdisplaybreaks

\oddsidemargin 0.5 cm

\evensidemargin 0.5 cm

\textwidth 16.1cm

\begin{document}

\title[Conditions $C_p$, $C'_p$, and $C''_p$ for $p$-operator spaces]
{Conditions $C_p$, $C'_p$, and $C''_p$ for $p$-operator spaces}
\author{Jung-Jin Lee}
\address{Department of Mathematics and Statistics\\
Mount Holyoke College, South Hadley, MA 01075, USA} \email[Jung-Jin
Lee]{jjlee@mtholyoke.edu}
\thanks{The author was supported by Hutchcroft Fund, Department of Mathematics and Statistics, Mount Holyoke College}


\date{\today}


\begin{abstract}
Conditions $C$, $C'$, and $C''$ were introduced for operator spaces in an attempt to study local reflexivity and exactness of operator spaces \cite[Chapter 14]{EffrosRuan}. For example, it is known that an operator space $W$ is locally reflexive if and only if $W$ satisfies condition $C''$ \cite[Theorem 14.3.1]{EffrosRuan} and  an operator space $V$ is exact if and only if $V$ satisfies condition $C'$ \cite[Theorem 14.4.1]{EffrosRuan}. It is also known that an operator space $V$ satisfies condition $C$ if and only if it satisfies conditions $C'$ and $C''$ \cite[Lemma 14.2.1]{EffrosRuan}, \cite[Theorem 5]{Han}. In this paper, we define $p$-operator space analogues of these definitions, which will be called conditions $C_p$, $C'_p$, and $C''_p$, and show that a $p$-operator space on $L_p$ space satisfies condition $C_p$ if and only if it satisfies both conditions $C'_p$ and $C''_p$. The $p$-operator space injective tensor product of $p$-operator spaces will play a key role.
\end{abstract}

\maketitle

\section{Introduction to $p$-Operator Spaces}
A \textit{concrete operator space} $V$ is defined to be a closed subspace of $\mathcal{B}(H)$, where $\mathcal{B}(H)$ denotes the space of all bounded linear operators on a Hilbert space $H$. For each $n\in \mathbb{N}$, the matrix algebra $\mathbb{M}_n(\mathcal{B}(H))$ with entries in $\mathcal{B}(H)$ can be identified with $\mathcal{B}(\underbrace{H\oplus\cdots\oplus H}_{n})$ via \textit{matrix multiplication} 
$$\left[\begin{array}{ccc} & & \\ & T_{ij} & \\ & & \end{array}\right]\left[\begin{array}{c} h_1 \\ \vdots \\ h_n \end{array}\right]=\left[\begin{array}{c} \sum_{j=1}^n T_{1j}h_j \\ \vdots \\ \sum_{j=1}^n T_{nj}h_j \end{array}\right],\quad [T_{ij}]\in \mathbb{M}_n(\mathcal{B}(H)),\quad h_j\in H,$$
and this gives rise to a norm $\|\cdot\|_n$ on $\mathbb{M}_n(V)$, which we denote by $M_n(V)$. It is then easy to verify that the following two properties (called Ruan's axioms) hold:
\begin{description}
\item[$\mathcal{D}_\infty$] for $u \in M_n(V)$ and $v \in M_m(V)$, we have $\left\|\left[\begin{array}{cc} u & 0 \\0 & v \end{array}\right]\right\|_{n+m}=\max\{\|u\|_n,\|v\|_m\}$.
\item[$\mathcal{M}$] for $u \in M_m(V)$, $\alpha \in \mathbb{M}_{n,m}(\mathbb{C})$, and $\beta \in \mathbb{M}_{m,n}(\mathbb{C})$, we have
$\|\alpha u \beta\|_n\leq \|\alpha\|\|u\|_m\|\beta\|$, where $\|\alpha\|$ is the norm of $\alpha$ as a member of $\mathcal{B}(\ell_2^m,\ell_2^n)$, and similarly for $\beta$.
\end{description}

An \textit{abstract operator space} is a Banach space $X$ together with a family of norms $\|\cdot\|_n$ defined on $\mathbb{M}_n(X)$ satisfying the conditions $\mathcal{D}_\infty$ and $\mathcal{M}$ above. In \cite{Ruan}, Ruan showed that these two concepts coincide and after Ruan's characterization, operator space theory has really been taken off and quickly developed into an active research area in modern analysis. Many important applications have been found in some related areas. For example, let $G$ be a locally compact group. It is well known that $G$ is amenable if and only if the convolution algebra $L_1(G)$ is amenable as a Banach algebra \cite{Johnson}. We consider another Banach algebra called the \textit{Fourier algebra} $A(G)$ which consists of all coefficient functions of the left regular representation $\lambda$ of $G$, i.e.,
$$A(G)=\{\omega(\cdot)=\langle \lambda(\cdot)\xi, \eta\rangle: \xi,\eta\in L_2(G)\}.$$

By \cite{Eymard}, $A(G)$ is a commutative Banach algebra with respect to pointwise multiplication and can be regarded as the predual of $VN(G)$, the group von Neumann algebra of $G$. If $G$ is abelian, then its dual group $\hat{G}$ is also abelian and we have the isometric isomorphism $A(G)\cong L_1(\hat{G})$, and this suggests a relationship between the amenability of $G$ and the amenability (as a Banach algebra) of $A(G)$. Indeed, if $A(G)$ is amenable, then $G$ is amenable. In the opposite direction, Johnson showed that the Banach algebra $A(G)$ fails to be amenable even in the case of very simple compact groups, such as $SU(2,\mathbb{C})$ \cite{Johnson_Fourier_algebra}.

In \cite{Ruan_operator_amenability}, Ruan studied the \textit{operator amenability} of $A(G)$ which can be regarded as the amenability of $A(G)$ in the category of operator spaces, and proved that a locally compact group $G$ is amenable if and only if $A(G)$ is operator amenable. This suggests that $A(G)$ is better viewed as an operator space, and motivated by this observation, there has been some research \cite{Daws, ALR} to study \textit{Fig\`{a}-Talamanca-Herz Algebra} $A_p(G)$, which can be regarded as an $L_p$ space generalization of the Fourier algebra $A(G)$ (The reader is referred to \cite{Figa_Talamanca, Herz} for more details on $A_p(G)$), in the framework of $L_p$ space generalization of operator spaces. This leads to the definition of $p$-operator spaces we will give below. Throughout this paper, we let $1<p<\infty$.

\begin{defi} Let $SQ_p$ denote the collection of subspaces of quotients of $L_p$ spaces. A Banach space $X$ is called a \textit{concrete
$p$-operator space} if $X$ is a closed subspace of $\mathcal{B}(E)$ for some $E\in SQ_p$, where $\mathcal{B}(E)$ denotes the space of all bounded linear operators on $E$.
\end{defi}

Let $\mathbb{M}_n(X)$ denote the linear space of all $n\times n$ matrices with entries in $X$. For a concrete $p$-operator space $X\subseteq\mathcal{B}(E)$ and for each $n\in \mathbb{N}$, define a norm $\|\cdot\|_n$ on
$\mathbb{M}_n(X)$ by identifying $\mathbb{M}_n(X)$ as a subspace of $\mathcal{B}(\ell_p^n(E))$, and let $M_n(X)$ denote the corresponding normed space. The norms $\|\cdot\|_n$ then satisfy

\begin{description}
\item[$\mathcal{D}_\infty$] for $u \in M_n(X)$ and $v \in M_m(X)$, we have $\|u\oplus v\|_{n+m}=\max\{\|u\|_n,\|v\|_m\}$.
\item[$\mathcal{M}_p$] for $u \in M_m(X)$, $\alpha \in \mathbb{M}_{n,m}(\mathbb{C})$, and $\beta \in \mathbb{M}_{m,n}(\mathbb{C})$, we have
$\|\alpha u \beta\|_n\leq \|\alpha\|\|u\|_m\|\beta\|$, where $\|\alpha\|$ is the norm of $\alpha$ as a member of $\mathcal{B}(\ell_p^m,\ell_p^n)$, and similarly for $\beta$.
\end{description}

\begin{remark}
When $p=2$, these are Ruan's axioms and $2$-operator spaces are simply operator spaces because the $SQ_2$ spaces are exactly Hilbert spaces.
\end{remark}

As in operator spaces, we can also define abstract $p$-operator spaces.

\begin{defi}
An \textit{abstract $p$-operator space} is a Banach space $X$ together with a sequence of norms $\|\cdot\|_n$ defined on $\mathbb{M}_n(X)$ satisfying the conditions $\mathcal{D}_\infty$ and $\mathcal{M}_p$ above.
\end{defi}

Thanks to the following theorem by Le Merdy, we do not distinguish between concrete $p$-operator spaces and abstract $p$-operator spaces, so from now on we will merely speak of $p$-operator spaces.

\begin{thm}\cite[Theorem 4.1]{LeMerdy} \label{LeMerdy characterization} An abstract $p$-operator space $X$ can be isometrically embedded in $\mathcal{B}(E)$ for some $E\in SQ_p$ in such a way that the canonical norms on $\mathbb{M}_n(X)$ arising from this embedding agree with the given norms.
\end{thm}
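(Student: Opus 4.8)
The plan is to transcribe Ruan's representation theorem, with the class $SQ_p$ in place of Hilbert spaces; the ``concrete $\Rightarrow$ abstract'' half is the computation already recorded, so the task is the converse. First I would make the usual reduction: it suffices to produce, for every $n\in\mathbb{N}$, every $u\in M_n(X)$ and every $\varepsilon>0$, an integer $k$ and a $p$-completely contractive map $\psi\colon X\to\mathcal{B}(\ell_p^k)$ with $\|\psi_n(u)\|\geq(1-\varepsilon)\|u\|_n$. Indeed, letting $\varphi$ be the $\ell_p$-direct sum of one such map for each triple $(n,u,\varepsilon)$, the operator $\varphi(x)$ is diagonal on the $\ell_p$-direct sum of the spaces $\ell_p^{k}$; that sum is an $\ell_p$-space, hence lies in $SQ_p$, and the norm of a diagonal operator on it is the supremum of the norms of its entries, so $\varphi$ is a $p$-completely contractive map into $\mathcal{B}(E)$, $E\in SQ_p$, with $\|\varphi_m(v)\|_m=\|v\|_m$ for all $m$ and all $v\in M_m(X)$.

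The core is the construction of the maps $\psi$, and here I would run the Hahn--Banach argument underlying Ruan's proof. Equip the Banach-space dual $X^*$ with matrix norms by $M_m(X^*)=\cbp(X,\mathcal{B}(\ell_p^m))$; modulo the axioms $\mathcal{D}_\infty$ and $\mathcal{M}_p$ for $X$, these again satisfy $\mathcal{D}_\infty$ and $\mathcal{M}_p$, so a contractive $[f_{rs}]\in M_m(X^*)$ is literally a $p$-completely contractive map $X\to\mathcal{B}(\ell_p^m)$. Fix $u\in M_n(X)$ with $\|u\|_n=1$. By the ordinary Hahn--Banach theorem there is a norming matrix of functionals, i.e.\ $[g_{ij}]$ with $\sum_{i,j}g_{ij}(u_{ij})=1$ and $|\sum_{i,j}g_{ij}(v_{ij})|\leq\|v\|_n$ for all $v\in M_n(X)$. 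The heart of the matter is to show that every such $[g_{ij}]$ is, up to $\varepsilon$, of the ``amplified and twisted'' form $g_{ij}=\sum_{r,s}\theta_{r,i}\,\zeta_{s,j}\,f_{rs}$ for some $m$, some $[f_{rs}]\in\mathrm{Ball}(M_m(X^*))$, and scalar families of norm at most one in the appropriate $\ell_{p'}$- and $\ell_p$-norms --- equivalently, that the set $B\subseteq M_n(X)^*$ of functionals of this special form is convex and balanced with weak-$*$ closure equal to $\mathrm{Ball}(M_n(X)^*)$. That $B$ lies in $\mathrm{Ball}(M_n(X)^*)$ and is convex follows from $\mathcal{M}_p$ and $\mathcal{D}_\infty$ (for $X^*$); convexity uses block-diagonal amplification of the $[f_{rs}]$'s together with the rescaling of weights allowed by $\mathcal{D}_\infty$, and here the identity $1/p+1/p'=1$ is exactly what makes the arithmetic close. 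The reverse inclusion --- weak-$*$ density of $B$ --- is the substantial point. Granting it, unwinding the twisting converts $[f_{rs}]$ into the desired $\psi=[f_{rs}]\colon X\to\mathcal{B}(\ell_p^m)$, with vectors in $\ell_p^n(\ell_p^m)$ and $\ell_{p'}^n(\ell_{p'}^m)$ that pair $\psi_n(u)$ to a number close to $1$, so that $\|\psi_n(u)\|$ is close to $\|u\|_n$.

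The main obstacle, and the origin of every complication over the case $p=2$, is that $SQ_p$ has neither self-duality nor orthogonality: the dual of an $SQ_p$ space lies in $SQ_{p'}$, and there is no inner product, no orthogonal projection, and no GNS construction. This surfaces in three places. First, the twisting weights above are $\ell_p$ on one side and $\ell_{p'}$ on the other, and the mixed $\ell_p(\ell_{p'})$-type norms produced by applying $\mathcal{M}_p$ do not coincide with the flat $\ell_{p'}$-norm unless $p=2$; identifying the correct form of $B$ and proving the bipolar identity with these asymmetric weights, rather than copying the Hilbert--Schmidt normalization, is where the work concentrates. Second, the verification that $M_m(X^*)=\cbp(X,\mathcal{B}(\ell_p^m))$ obeys $\mathcal{D}_\infty$ and $\mathcal{M}_p$ has to be done directly, with no $*$-structure on $\mathcal{B}(\ell_p^m)$ to lean on. Third, the final assembly succeeds here by a plain $\ell_p$-direct sum of finite-dimensional pieces --- using only stability of $SQ_p$ under subspaces, quotients and $\ell_p$-sums --- so no ultraproduct of $L_p$ spaces is required, at the cost of landing in a subspace of $\mathcal{B}(\ell_p)$ rather than in a full $\mathcal{B}(E)$. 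I would expect the weak-$*$ density step, with its asymmetric weights, to absorb essentially all of the effort.
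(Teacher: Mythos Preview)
The paper does not prove this theorem: it is quoted verbatim from Le~Merdy's paper \cite[Theorem~4.1]{LeMerdy} and no argument is supplied here, so there is no in-paper proof to compare your proposal against.

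That said, your sketch is broadly the right shape for Le~Merdy's original argument --- the reduction to finite-dimensional targets, the bipolar/Hahn--Banach separation, and the final $\ell_p$-direct sum assembly are all correct ingredients, and you have accurately located the genuine difficulty in the asymmetric $\ell_p/\ell_{p'}$ weights that replace the self-dual Hilbert--Schmidt pairing. However, as a proof the proposal has a real gap: the weak-$*$ density of your set $B$ in $\mathrm{Ball}(M_n(X)^*)$ is precisely the content of the theorem, and you explicitly write ``Granting it'' without supplying the argument. In Le~Merdy's proof this step is not a routine bipolar calculation; it requires a careful analysis (in his paper, via a factorization-through-$\ell_p^k$ characterization of the matricial unit ball and a compactness/separation argument tailored to the $\mathcal{M}_p$ axiom) that your outline does not provide. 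So the plan is sound, but what you have written is a roadmap rather than a proof, and the one step you defer is the theorem.
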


Note that a linear map $u:X\to Y$ between $p$-operator spaces $X$ and $Y$ induces a map $u_n:M_n(X)\to M_n(Y)$ by applying $u$ entrywise. We say that $u$ is $p$-\textit{completely bounded} if $\|u\|_{pcb}:=\sup_n\|u_n\|<\infty$. Similarly, we define $p$-\textit{completely contractive}, $p$-\textit{completely isometric}, and $p$-\textit{completely quotient} maps. We write $\mathcal{CB}_p(X,Y)$ for the space of all $p$-completely bounded maps from $X$ into $Y$, and to turn the mapping space $\mathcal{CB}_p(X,Y)$ into a $p$-operator space, we define a norm on $\mathbb{M}_n(\mathcal{CB}_p(X,Y))$ by identifying this space with $\mathcal{CB}_p(X,M_n(Y))$. Using Le Merdy's theorem, one can show that $\mathcal{CB}_p(X,Y)$ itself is a $p$-operator space. In particular, the $p$-\textit{operator dual space} of $X$ is defined to be $\mathcal{CB}_p(X,\mathbb{C})$. The next lemma by Daws shows that we may identify the Banach dual space $X'$ of $X$ with the $p$-operator dual space $\mathcal{CB}_p(X,\mathbb{C})$ of $X$.

\begin{lemma}\cite[Lemma 4.2]{Daws} \label{banach dual = operator dual}
Let $X$ be a $p$-operator space, and let $\varphi\in X'$, the Banach dual of $X$. Then $\varphi$ is $p$-completely bounded as a map to $\mathbb{C}$. Moreover, $\|\varphi\|_{pcb}=\|\varphi\|$.
\end{lemma}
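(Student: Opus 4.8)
The plan is to prove the nontrivial inequality $\|\varphi\|_{pcb}\le\|\varphi\|$; since $\varphi_1=\varphi$, the reverse inequality $\|\varphi\|\le\|\varphi\|_{pcb}$ is immediate, and together these give the claimed equality. So it suffices to show $\|\varphi_n\|\le\|\varphi\|$ for every $n$. Fix $n$ and a matrix $[x_{ij}]\in M_n(X)$; note that $\varphi_n([x_{ij}])=[\varphi(x_{ij})]$ lies in $M_n(\mathbb{C})$, whose norm is the operator norm on $\ell_p^n$, and we must bound it by $\|\varphi\|\,\|[x_{ij}]\|_n$.

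First I would invoke the standard duality description of the operator norm: for any $A\in\mathcal{B}(\ell_p^n)$ one has $\|A\|=\sup|\langle A\xi,\eta\rangle|$, the supremum taken over $\xi$ in the unit ball of $\ell_p^n$ and $\eta$ in the unit ball of $\ell_{p'}^n$ (where $1/p+1/p'=1$), using the canonical bilinear pairing and the isometric identification $(\ell_p^n)'=\ell_{p'}^n$. Applying this with $A=[\varphi(x_{ij})]$ and expanding, $\langle A\xi,\eta\rangle=\sum_{i,j}\eta_i\,\varphi(x_{ij})\,\xi_j=\varphi\big(\sum_{i,j}\eta_i x_{ij}\xi_j\big)=\varphi(\eta\,[x_{ij}]\,\xi)$, where $\eta\in\mathbb{M}_{1,n}(\mathbb{C})$ is regarded as a row vector, $\xi\in\mathbb{M}_{n,1}(\mathbb{C})$ as a column vector, and the matrix triple product $\eta\,[x_{ij}]\,\xi$ belongs to $M_1(X)=X$.

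Now I would apply axiom $\mathcal{M}_p$ to this triple product with $u=[x_{ij}]$, $\alpha=\eta$, $\beta=\xi$, obtaining $\|\eta\,[x_{ij}]\,\xi\|_X\le\|\eta\|\,\|[x_{ij}]\|_n\,\|\xi\|$. Here $\|\xi\|$ is the norm of $\xi$ viewed as an operator $\mathbb{C}=\ell_p^1\to\ell_p^n$, which equals $\|\xi\|_{\ell_p^n}$, and $\|\eta\|$ is the norm of $\eta$ viewed as an operator $\ell_p^n\to\ell_p^1=\mathbb{C}$, which equals $\|\eta\|_{\ell_{p'}^n}$. Feeding this back, $|\langle A\xi,\eta\rangle|\le\|\varphi\|\,\|\eta\|_{\ell_{p'}^n}\,\|[x_{ij}]\|_n\,\|\xi\|_{\ell_p^n}$, and taking the supremum over the two unit balls yields $\|[\varphi(x_{ij})]\|_{\mathcal{B}(\ell_p^n)}\le\|\varphi\|\,\|[x_{ij}]\|_n$. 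Since $[x_{ij}]$ and $n$ were arbitrary, $\|\varphi\|_{pcb}=\sup_n\|\varphi_n\|\le\|\varphi\|$.

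The argument is mostly bookkeeping, and the one point that genuinely requires care — the step I expect to be the crux — is the identification of the norms of the rectangular scalar matrices $\eta$ and $\xi$ that appear in $\mathcal{M}_p$: these are norms of maps between $\ell_p$-spaces of different dimensions, and for the row vector $\eta$ the relevant exponent flips to the conjugate exponent $p'$. Pinning that down correctly (rather than slipping into the Hilbertian $\ell_2$-norms valid only when $p=2$) is exactly what forces the constant to be $\|\varphi\|$ and not something larger.
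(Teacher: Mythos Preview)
Your argument is correct: the duality $\|A\|_{\mathcal{B}(\ell_p^n)}=\sup\{|\langle A\xi,\eta\rangle|:\|\xi\|_{\ell_p^n}\le 1,\ \|\eta\|_{\ell_{p'}^n}\le 1\}$ combined with axiom $\mathcal{M}_p$ applied to the triple product $\eta\,[x_{ij}]\,\xi$ gives exactly $\|\varphi_n\|\le\|\varphi\|$, and the identifications $\|\xi\|_{\mathcal{B}(\ell_p^1,\ell_p^n)}=\|\xi\|_{\ell_p^n}$ and $\|\eta\|_{\mathcal{B}(\ell_p^n,\ell_p^1)}=\|\eta\|_{\ell_{p'}^n}$ are both right.

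There is nothing in the present paper to compare against: the lemma is quoted from \cite[Lemma 4.2]{Daws} and no proof is reproduced here. Your argument is in fact the standard one (and essentially the one Daws gives), so there is no alternative route to contrast.
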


If $E=L_p(\mu)$ for some measure $\mu$ and $X\subseteq \mathcal{B}(E)=\mathcal{B}(L_p(\mu))$, then we say that $X$ is a $p$-\textit{operator space on} $L_p$ \textit{space}. These $p$-operator spaces are often easier to work with. For example, let $\kappa_X:X\to X''$ denote the canonical inclusion from a $p$-operator space $X$ into its second dual. Contrary to operator spaces, $\kappa_X$ is \textit{not} always $p$-completely isometric. Thanks to the following theorem by Daws, however, we can easily characterize those $p$-operator spaces with the property that the canonical inclusion is $p$-completely isometric.

\begin{prop}\cite[Proposition 4.4]{Daws} \label{Daws characterization}
Let $X$ be a $p$-operator space. Then $\kappa_X$ is a $p$-complete contraction. Moreover, $\kappa_X$ is a $p$-complete isometry if and only if $X\subseteq \mathcal{B}(L_p(\mu))$ $p$-completely isometrically for some measure $\mu$.
\end{prop}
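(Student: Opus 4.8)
The assertion splits into three claims: that $\kappa_X$ is a $p$-complete contraction, and the two directions of the stated equivalence. The first is formal. Using the defining identifications $M_n(X'')=\mathcal{CB}_p(X',M_n)$ and $M_m(X')=\mathcal{CB}_p(X,M_m)$, the element $(\kappa_X)_n(u)$ associated with $u=[u_{ij}]\in M_n(X)$ is the map $\widehat u\colon X'\to M_n$, $\varphi\mapsto[\varphi(u_{ij})]$. To bound $\|\widehat u\|_{pcb}$, fix $m$ and $\Phi=[\varphi_{kl}]$ in the unit ball of $M_m(X')=\mathcal{CB}_p(X,M_m)$, so $\|\Phi\|_{pcb}=\|\Phi\|_{M_m(X')}\le 1$. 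The $m$-th amplification $(\widehat u)_m(\Phi)\in M_m(M_n)$ has $(k,l)$-block $[\varphi_{kl}(u_{ij})]_{ij}$, and the canonical $p$-complete isometry $M_m(M_n)=M_n(M_m)$ coming from $\ell_p^m(\ell_p^n)=\ell_p^n(\ell_p^m)$ carries it to $\Phi_n(u)=[\Phi(u_{ij})]_{ij}\in M_n(M_m)$, the amplification of $\Phi\colon X\to M_m$ evaluated at $u$. Hence $\|(\widehat u)_m(\Phi)\|=\|\Phi_n(u)\|\le\|\Phi\|_{pcb}\|u\|_{M_n(X)}\le\|u\|_{M_n(X)}$; taking a supremum over $m$ and over $\Phi$ yields $\|(\kappa_X)_n(u)\|_{M_n(X'')}\le\|u\|_{M_n(X)}$, which is the claim.

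Next, suppose $X\subseteq\mathcal{B}(L_p(\mu))$ $p$-completely isometrically, with inclusion $\iota$. Since $L_p(\mu)$ is reflexive, $\mathcal{B}(L_p(\mu))$ is a dual Banach space, and the essential input here is that it is in fact a dual $p$-operator space: there is a $p$-operator space $F$ with $F'=\mathcal{B}(L_p(\mu))$ $p$-completely isometrically. Put $\pi:=(\kappa_F)'\colon \mathcal{B}(L_p(\mu))''=F'''\to F'=\mathcal{B}(L_p(\mu))$. By the contraction statement applied to $F$, $\kappa_F$ is a $p$-complete contraction, hence so is its adjoint $\pi$, and $\pi\circ\kappa_{\mathcal{B}(L_p(\mu))}=\mathrm{id}$ (the usual identity for the canonical embeddings). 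The second adjoint $\iota''\colon X''\to\mathcal{B}(L_p(\mu))''$ is a $p$-complete contraction, and naturality of $\kappa$ gives $\iota''\circ\kappa_X=\kappa_{\mathcal{B}(L_p(\mu))}\circ\iota$. Therefore $\rho:=\pi\circ\iota''\colon X''\to\mathcal{B}(L_p(\mu))$ is a $p$-complete contraction with $\rho\circ\kappa_X=\iota$, so for $u\in M_n(X)$ we get $\|u\|_{M_n(X)}=\|\iota_n(u)\|=\|\rho_n((\kappa_X)_n(u))\|\le\|(\kappa_X)_n(u)\|_{M_n(X'')}$. Combined with the contractivity of $\kappa_X$ this is an equality for every $n$, so $\kappa_X$ is a $p$-complete isometry.

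Conversely, if $\kappa_X$ is a $p$-complete isometry, then $X$ is $p$-completely isometric to the subspace $\kappa_X(X)$ of $X''=(X')'$. Since $X''$ is the $p$-operator dual of a $p$-operator space, the second structural fact I would invoke — that the $p$-operator dual of any $p$-operator space is a $p$-operator space on an $L_p$ space — shows that $X''$, and hence $X$, embeds $p$-completely isometrically into some $\mathcal{B}(L_p(\nu))$. The routine parts of this argument are the $\mathcal{CB}_p$-bookkeeping of the first paragraph, the fact that the adjoint of a $p$-complete contraction is again a $p$-complete contraction, and the naturality of $\kappa$. The genuine content lies in the two structural inputs used above: that $\mathcal{B}(L_p(\mu))$ carries a $p$-operator space predual, and that $p$-operator duals live on $L_p$ spaces. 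If these are not already available, proving them is the main obstacle; it comes down to constructing the predual of $\mathcal{B}(L_p(\mu))$ explicitly — realised, after equipping the factors with appropriate $p$-operator space structures, as a $p$-operator space projective tensor product $L_p(\mu)\projtp L_{p'}(\mu)$ — and checking that its $p$-operator dual reproduces $\mathcal{B}(L_p(\mu))$ with the prescribed matrix norms.
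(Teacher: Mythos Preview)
The paper does not supply a proof of this proposition; it is quoted verbatim from \cite[Proposition 4.4]{Daws} and used as a black box. There is therefore nothing in the present paper to compare your argument against.

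That said, your outline is correct and matches the strategy Daws uses. The bookkeeping in the first paragraph is standard. For the forward implication, your retraction argument via $\pi=(\kappa_F)'$ is the right one; the substantive input, as you flag, is that $\mathcal{B}(L_p(\mu))$ is a dual $p$-operator space, and your suggested realisation of the predual as $L_p(\mu)\projtp L_{p'}(\mu)$ with suitable matrix structures is precisely how Daws does it. For the converse, the fact you need --- that every $p$-operator dual embeds $p$-completely isometrically into some $\mathcal{B}(L_p)$ --- is \cite[Theorem 4.3]{Daws}, which the present paper also cites elsewhere. You are right that these two structural results carry all the content; the rest is formal.
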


Conditions $C$, $C'$, and $C''$ for operator spaces were introduced and studied in \cite[Chapter 14]{EffrosRuan} and \cite{Han} and they play an important role in understanding local reflexivity and exactness of operator spaces. For example, it is known that an operator space is locally reflexive if and only if it satisfies condition $C''$ \cite[Theorem 14.3.1]{EffrosRuan}. It is also known that an operator space is exact if and only if it satisfies condition $C'$ \cite[Theorem 14.4.1]{EffrosRuan}. In this paper, we define $p$-operator space analogues of these conditions, which will be called conditions $C_p$, $C'_p$, and $C''_p$, and show that a $p$-operator space on $L_p$ space satisfies condition $C_p$ if and only if it satisfies both conditions $C'_p$ and $C''_p$. 

\section{Tensor Product of $p$-Operator Spaces}

In this section, we recall basic properties of tensor products on $p$-operator spaces studied in \cite{Daws, ALR}. We mainly focus on $p$-projective tensor product and $p$-injective tensor product.
\begin{defi}
Let $X,Y$ be $p$-operator spaces. Let $X\otimes Y$ denote the algebraic tensor product of $X$ and $Y$. For $u\in \mathbb{M}_n (X\otimes Y)$, let
$$\|u\|_{\wedge_p}=\inf\{\|\alpha\|\|v\|\|w\|\|\beta\|:u=\alpha(v\otimes w)\beta\},$$
where the infimum is taken over $r,s\in \mathbb{N}$, $\alpha\in M_{n,r\times s}$, $v\in M_{r}(X)$, $w\in M_{s}(Y)$, and $\beta \in M_{r\times s,n}$.
\end{defi}

Daws defined and studied the $p$-projective tensor product \cite{Daws}. Note that $\|\cdot\|_{\wedge_p}$ gives the algebraic tensor product $X\otimes Y$ a $p$-operator space structure \cite[Proposition 4.8]{Daws}. Furthermore, $\|\cdot\|_{\wedge_p}$ is the largest subcross $p$-operator space norm on $X\otimes Y$ in the sense that $\|x\otimes y\|\leq \|x\|_r\|y\|_s$ for all $x\in M_r(X)$ and all $y\in M_s(Y)$ \cite[Proposition 4.8]{Daws}. The $p$-\textit{operator space projective tensor product} is defined to be the completion of $X\otimes Y$ with respect to this norm and is denoted by $X\projtp Y$.

\begin{remark} \label{banach proj p op proj}\quad
\begin{enumerate}
\item One can show that $p$-operator space projective tensor product is commutative, i.e., $X\projtp Y=Y\projtp X$ $p$-completely isometrically.
\item By universality of the Banach space projective tensor product $\overset{\pi}{\otimes}$ \cite[A.3.3]{BlecherLeMerdy}, we have
$$\|u\|_{\wedge_p}\leq \|u\|_{\pi}$$
for all $u\in X\otimes Y$.
\end{enumerate}
\end{remark}

Let $V,W$, and $Z$ be $p$-operator spaces, and let $\psi:V\times W\to Z$ be a bilinear
map. Define bilinear maps $\psi_{r,s;t,u}$ by
$$\psi_{r,s;t,u}:M_{r,s}(V)\times M_{t,u}(W)\to M_{r\times t,s\times u}(Z),\qquad (v,w)\mapsto (\psi(v_{i,j},w_{k,l})),$$
and let $\psi_{r;s}=\psi_{r,r;s,s}$. Finally define
$$\|\psi\|_{jpcb}=\sup\{\|\psi_{r;s}\|:r,s\in \mathbb{N}\}.$$

We say that $\psi$ is \textit{jointly $p$-completely bounded} (respectively,
\textit{jointly $p$-completely contractive}) if
$\|\psi\|_{jpcb}<\infty$ (respectively, $\|\psi\|_{jpcb}\leq 1$). The space
of all jointly $p$-completely bounded maps from $V\times W$ to $Z$
will be denoted by $\cbp(V\times W,Z)$ and this space can be turned
into a $p$-operator space in the same way as for $\cbp(V,W)$. Here we collect some results on the $p$-projective tensor product for convenience.

\begin{prop}\cite[Proposition 4.9]{Daws} \label{projtp and dual}
Let $X,Y$, and $Z$ be $p$-operator spaces. Then we have natural $p$-completely isometric identifications $$\mathcal{CB}_p(X\projtp Y, Z)=\cbp(X\times Y,Z)=\mathcal{CB}_p(X,\mathcal{CB}_p(Y,Z)).$$ In particular,
$$(X\projtp Y)'=\mathcal{CB}_p(X,Y').$$
\end{prop}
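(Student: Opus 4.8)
The plan is to read the first identification as the universal property of $\projtp$ and the second as a currying isomorphism, to prove both at the isometric level for an arbitrary $p$-operator space $Z$, and then to replace $Z$ by $M_n(Z)$ in order to upgrade the isometries to $p$-complete isometries. The final assertion will then be immediate: take $Z=\mathbb{C}$ and recall that, by Lemma \ref{banach dual = operator dual}, the Banach dual $Y'$ carries precisely the $p$-operator space structure of $\mathcal{CB}_p(Y,\mathbb{C})$.

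For $\mathcal{CB}_p(X\projtp Y,Z)=\cbp(X\times Y,Z)$ I would proceed as follows. Given $\psi\in\cbp(X\times Y,Z)$, define $\widetilde\psi$ on the algebraic tensor product by $\widetilde\psi(x\otimes y)=\psi(x,y)$ and linear extension. The crucial observation is that for $v\in M_r(X)$ and $w\in M_s(Y)$, applying $\widetilde\psi$ entrywise to $v\otimes w\in M_{r\times s}(X\otimes Y)$ produces exactly $\psi_{r;s}(v,w)$; and since the entrywise application of a linear map commutes with multiplication by scalar matrices on either side, $\widetilde\psi_n\big(\alpha(v\otimes w)\beta\big)=\alpha\,\psi_{r;s}(v,w)\,\beta$ for all $\alpha\in M_{n,r\times s}$ and $\beta\in M_{r\times s,n}$. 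Combining axiom $\mathcal{M}_p$ in $M_n(Z)$ with the estimate $\|\psi_{r;s}(v,w)\|\le\|\psi\|_{jpcb}\|v\|_r\|w\|_s$, and then taking the infimum over all representations $u=\alpha(v\otimes w)\beta$ in the definition of $\|\cdot\|_{\wedge_p}$, gives $\|\widetilde\psi_n(u)\|\le\|\psi\|_{jpcb}\,\|u\|_{\wedge_p}$, so that $\widetilde\psi$ extends by continuity to $X\projtp Y$ with $\|\widetilde\psi\|_{pcb}\le\|\psi\|_{jpcb}$. In the other direction, for $T\in\mathcal{CB}_p(X\projtp Y,Z)$ I would set $\psi(x,y)=T(x\otimes y)$, note that $\psi_{r;s}(v,w)=T_{r\times s}(v\otimes w)$, and invoke the subcross inequality $\|v\otimes w\|_{\wedge_p}\le\|v\|_r\|w\|_s$ of \cite[Proposition 4.8]{Daws} to get $\|\psi\|_{jpcb}\le\|T\|_{pcb}$. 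These two assignments are mutually inverse (the second extends the first from $X\otimes Y$ by density), so the identification is isometric.

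For $\cbp(X\times Y,Z)=\mathcal{CB}_p(X,\mathcal{CB}_p(Y,Z))$ I would associate to $\psi$ the map $\Phi\colon X\to\mathcal{CB}_p(Y,Z)$, $\Phi(x)(y)=\psi(x,y)$; the case $r=1$ of what follows already shows each $\Phi(x)$ is $p$-completely bounded. Unwinding the defining identity $M_r(\mathcal{CB}_p(Y,Z))=\mathcal{CB}_p(Y,M_r(Z))$, one checks that $\Phi_r(v)\in\mathcal{CB}_p(Y,M_r(Z))$ has $s$-th amplification $w\mapsto\psi_{r;s}(v,w)$ after the canonical shuffle identification $M_r(M_s(Z))=M_{r\times s}(Z)$, so that $\|\Phi_r(v)\|_{pcb}=\sup_s\sup_{\|w\|_s\le1}\|\psi_{r;s}(v,w)\|\le\|\psi\|_{jpcb}\|v\|_r$ and hence $\|\Phi\|_{pcb}\le\|\psi\|_{jpcb}$; the inverse assignment $\Phi\mapsto\big((x,y)\mapsto\Phi(x)(y)\big)$ runs the same estimates backwards. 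Since all of these correspondences are natural in $Z$, replacing $Z$ by $M_n(Z)$ and using that $M_n(\mathcal{CB}_p(A,Z))=\mathcal{CB}_p(A,M_n(Z))$ for any $p$-operator space $A$ (and likewise for $\cbp(X\times Y,-)$) shows the identifications hold at every matrix level, hence are $p$-complete isometries; specializing to $Z=\mathbb{C}$ then gives $(X\projtp Y)'=\mathcal{CB}_p(X,Y')$.

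I expect the main obstacle to be bookkeeping rather than anything conceptual — the genuine content is packaged into the definition of $\|\cdot\|_{\wedge_p}$ and the fact that it is the largest subcross $p$-operator space norm. Concretely, one must keep careful track of the canonical shuffle isomorphisms relating $M_{r\times s}(Z)$, $M_r(M_s(Z))$ and $M_s(M_r(Z))$, verify that entrywise application of a linear map intertwines the two scalar-matrix module actions, and check that the isometric identifications are compatible with the matrix amplifications so that they genuinely upgrade to $p$-completely isometric ones.
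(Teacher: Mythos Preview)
Your argument is correct and is the standard proof of the universal property of the $p$-operator space projective tensor product together with the currying isomorphism; the only delicate point is the bookkeeping with the shuffle identifications, which you have identified. Note, however, that the paper does not supply its own proof of this proposition: it is quoted verbatim from \cite[Proposition 4.9]{Daws}, so there is nothing in the present paper to compare against beyond the citation, and your write-up is essentially a reconstruction of Daws's argument.
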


As in operator spaces, the $p$-operator space projective tensor product is projective in the following sense:

\begin{prop}\cite[Proposition 4.10]{Daws} Let $X, X_1, Y$, and $Y_1$ be $p$-operator spaces. If $u:X\to X_1$ and $v:Y\to Y_1$ are $p$-complete quotient maps, then $u\otimes v$ extends to a $p$-complete quotient map $u\otimes v:X\projtp Y \to X_1\projtp Y_1$.
\end{prop}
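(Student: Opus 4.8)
The plan is to follow the classical argument for projectivity of the operator space projective tensor product. Two facts must be established: that $u\otimes v$ is $p$-completely contractive, so that each induced map $(u\otimes v)_n$ sends the open unit ball of $M_n(X\projtp Y)$ \emph{into} the open unit ball of $M_n(X_1\projtp Y_1)$; and --- the substantive point --- that each $(u\otimes v)_n$ sends the former open ball \emph{onto} the latter. For contractivity, I would start from the defining formula for $\|\cdot\|_{\wedge_p}$: if $w\in M_n(X\otimes Y)$ is written as $w=\alpha(a\otimes b)\beta$ with $a\in M_r(X)$, $b\in M_s(Y)$, $\alpha\in M_{n,r\times s}$, $\beta\in M_{r\times s,n}$, then, since applying a linear map entrywise commutes both with left/right multiplication by scalar matrices and with the matrix-tensor operation, $(u\otimes v)_n(w)=\alpha\big(u_r(a)\otimes v_s(b)\big)\beta$. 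Hence $\|(u\otimes v)_n(w)\|_{\wedge_p}\le\|\alpha\|\,\|u_r(a)\|\,\|v_s(b)\|\,\|\beta\|\le\|u\|_{pcb}\|v\|_{pcb}\,\|\alpha\|\,\|a\|\,\|b\|\,\|\beta\|$, and taking the infimum over all factorizations of $w$ gives $\|u\otimes v\|_{pcb}\le\|u\|_{pcb}\|v\|_{pcb}\le1$. In particular $u\otimes v$ extends to a $p$-complete contraction $X\projtp Y\to X_1\projtp Y_1$, and each $(u\otimes v)_n$ is continuous.

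For surjectivity on the open balls, fix $n$ and $z\in M_n(X_1\projtp Y_1)$ with $\|z\|_{\wedge_p}<1$. Since $M_n(X_1\projtp Y_1)$ is the completion of $M_n(X_1\otimes Y_1)$ in the $\|\cdot\|_{\wedge_p}$-norm, a routine successive-approximation argument yields elements $w_k\in M_n(X_1\otimes Y_1)$ with $z=\sum_{k\ge1}w_k$ (convergent in $M_n(X_1\projtp Y_1)$) and $\sum_k\|w_k\|_{\wedge_p}<1$. I would then lift each $w_k$ \emph{exactly}. Fix $\epsilon_k>0$ with $\sum_k\big(\|w_k\|_{\wedge_p}+\epsilon_k\big)<1$; assuming $w_k\ne0$ (the other case being trivial), pick a factorization $w_k=\alpha_k(a_k\otimes b_k)\beta_k$ with $\|\alpha_k\|\,\|a_k\|\,\|b_k\|\,\|\beta_k\|<\|w_k\|_{\wedge_p}+\epsilon_k$. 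Rescaling the four factors (absorbing the norms of $\alpha_k$ and $\beta_k$ into the tensor legs and then balancing $a_k$ against $b_k$), we may assume $\|\alpha_k\|=\|\beta_k\|=1$ and $\|a_k\|=\|b_k\|<\rho_k$, where $\rho_k:=\big(\|w_k\|_{\wedge_p}+\epsilon_k\big)^{1/2}<1$. Because $u$ is a $p$-complete quotient map, $u_{r_k}$ maps the open unit ball of $M_{r_k}(X)$ onto that of $M_{r_k}(X_1)$, hence by linearity the open ball of radius $\rho_k$ onto the open ball of radius $\rho_k$; as $\|a_k\|<\rho_k$, there is $\tilde a_k\in M_{r_k}(X)$ with $u_{r_k}(\tilde a_k)=a_k$ and $\|\tilde a_k\|<\rho_k$, and likewise $\tilde b_k\in M_{s_k}(Y)$ with $v_{s_k}(\tilde b_k)=b_k$ and $\|\tilde b_k\|<\rho_k$. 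Put $\tilde w_k:=\alpha_k(\tilde a_k\otimes\tilde b_k)\beta_k\in M_n(X\otimes Y)$; then $(u\otimes v)_n(\tilde w_k)=w_k$ and $\|\tilde w_k\|_{\wedge_p}\le\|\tilde a_k\|\,\|\tilde b_k\|<\rho_k^2=\|w_k\|_{\wedge_p}+\epsilon_k$.

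To finish: since $\sum_k\|\tilde w_k\|_{\wedge_p}<\sum_k\big(\|w_k\|_{\wedge_p}+\epsilon_k\big)<1$ and $M_n(X\projtp Y)$ is a Banach space, the series $\tilde z:=\sum_k\tilde w_k$ converges in $M_n(X\projtp Y)$ with $\|\tilde z\|_{\wedge_p}<1$, and by continuity of $(u\otimes v)_n$ we get $(u\otimes v)_n(\tilde z)=\sum_k w_k=z$. Thus $(u\otimes v)_n$ maps the open unit ball of $M_n(X\projtp Y)$ onto the open unit ball of $M_n(X_1\projtp Y_1)$ for every $n$, which is exactly the assertion that $u\otimes v$ is a $p$-complete quotient map.

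The only step that goes beyond bookkeeping is producing the lift $\tilde w_k$ whose $\|\cdot\|_{\wedge_p}$-norm is essentially no bigger than that of $w_k$: a $p$-complete quotient map on its own only supplies preimages of norm $<1$ for elements of norm $<1$. The resolution is to combine linearity of the maps $u_{r_k}$, $v_{s_k}$ (so that open balls of radius $\rho$ map onto open balls of radius $\rho$) with the multiplicative shape of $\|\cdot\|_{\wedge_p}$, which allows one to lift the two tensor legs separately; and the decomposition $z=\sum_k w_k$ with $\sum_k\|w_k\|_{\wedge_p}<1$ is what promotes ``dense image of the open ball'' to ``the open ball maps onto the open ball''. (A duality-based argument using $(X\projtp Y)'=\mathcal{CB}_p(X,Y')$ is conceivable, but it would need a duality between $p$-complete quotient maps and $p$-complete isometries not recorded among the facts above, so the direct route seems preferable.)
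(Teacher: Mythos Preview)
Your argument is correct and follows the classical projectivity proof for the operator space projective tensor product, transplanted verbatim to the $p$-setting. Note, however, that the paper does not supply its own proof of this proposition: it is merely quoted from \cite[Proposition 4.10]{Daws}, so there is no in-paper argument to compare against. Your direct lifting-through-a-factorization approach is exactly the one used in Daws's paper (and in the standard operator space references), so in that sense your proof matches the cited source; the only comment is that the final parenthetical about a duality-based alternative is unnecessary, since the factorization argument already closes the proof cleanly.
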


We now briefly introduce the $p$-operator space injective tensor product.
\begin{defi}\label{definition of itp} Let $X,Y$ be $p$-operator spaces. Regarding the algebraic tensor product $X\otimes Y$ as a subspace of $\cbp(X',Y)$, we define the $p$-\textit{operator space injective tensor product} $X\injtp Y$ to be the completion of $X\otimes Y$ in $\cbp(X',Y)$.
\end{defi}

To be precise, for $u=[u_{ij}]\in \mathbb{M}_n(X\otimes Y)$ with $u_{ij}=\sum_{k=1}^{N_{ij}}x_k^{ij}\otimes y_k^{ij}$, the $p$-operator space injective tensor product norm $\|u\|_{\vee_p}$ is defined by 
\begin{align}
\label{defi of injtp}  \begin{split}
    \|u\|_{\vee_p}=&\|u\|_{M_n(\cbp(X',Y))}=\|u\|_{\cbp(X',M_n(Y))}\\
     =&\sup\left\{\left\|\left[ \sum_{k=1}^{N_{ij}} f_{st}(x_k^{ij})y_k^{ij}\right]\right\|_{M_{mn}(Y)}:m\in \mathbb{N}, f=[f_{st}]\in M_m(X')_1\right\},
  \end{split}
\end{align}
where $M_m(X')_1$ denotes the closed unit ball of $M_m(X')=\cbp(X,M_m)$.

\begin{prop} \label{injtp is contractive}
Suppose that $X, X_1, Y$, and $Y_1$ are $p$-operator spaces. Given $p$-complete contractions $\varphi:X\to X_1$ and $\psi:Y\to Y_1$, the mapping
$$\varphi\otimes \psi:X\otimes Y \to X_1\otimes Y_1$$
extends to a $p$-complete contraction
$$\varphi\otimes \psi:X\injtp Y \to X_1\injtp Y_1.$$
\end{prop}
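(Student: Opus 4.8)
The plan is to reduce the statement about $p$-completely contractive maps on the injective tensor product to the defining formula \eqref{defi of injtp} for the $\vee_p$-norm, and then to chase the relevant adjoint maps through that formula. First I would note that by definition $X\injtp Y$ is the completion of $X\otimes Y$ inside $\cbp(X',Y)$, so it suffices to show that the algebraic map $\varphi\otimes\psi\colon X\otimes Y\to X_1\otimes Y_1$ is $p$-completely contractive for the norms induced from $\cbp(X',Y)$ and $\cbp(X_1',Y_1)$ respectively; it then extends uniquely to the completions by density and a standard continuity argument. Since the $\vee_p$-norm on $\mathbb{M}_n(X_1\otimes Y_1)$ is computed by testing against the closed unit ball of $M_m(X_1')$ for all $m$, the whole problem is to control these tests.

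The key step is the observation that if $\varphi\colon X\to X_1$ is a $p$-complete contraction, then its Banach (equivalently, by Lemma~\ref{banach dual = operator dual}, $p$-operator) adjoint $\varphi'\colon X_1'\to X'$ is also a $p$-complete contraction, and applying it entrywise sends $M_m(X_1')_1$ into $M_m(X')_1$. Concretely, fix $u=[u_{ij}]\in \mathbb{M}_n(X\otimes Y)$ with $u_{ij}=\sum_k x_k^{ij}\otimes y_k^{ij}$, so that $(\varphi\otimes\psi)_n(u)$ has $(i,j)$ entry $\sum_k \varphi(x_k^{ij})\otimes\psi(y_k^{ij})$. Given $m\in\mathbb{N}$ and $f=[f_{st}]\in M_m(X_1')_1$, the corresponding test quantity from \eqref{defi of injtp} is
\begin{equation*}
\left\|\left[\sum_k f_{st}(\varphi(x_k^{ij}))\,\psi(y_k^{ij})\right]\right\|_{M_{mn}(Y_1)}
=\left\|\psi_{mn}\!\left(\left[\sum_k (\varphi'f_{st})(x_k^{ij})\,y_k^{ij}\right]\right)\right\|_{M_{mn}(Y_1)},
\end{equation*}
where $\varphi'f=[\varphi'f_{st}]=[f_{st}\circ\varphi]\in M_m(X')_1$ because $\varphi$ is $p$-completely contractive. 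Using that $\psi$ is a $p$-complete contraction, this is bounded by $\bigl\|[\sum_k (\varphi'f_{st})(x_k^{ij})\,y_k^{ij}]\bigr\|_{M_{mn}(Y)}$, which by \eqref{defi of injtp} applied on the $X,Y$ side is at most $\|u\|_{\vee_p}$. Taking the supremum over all $m$ and all $f\in M_m(X_1')_1$ gives $\|(\varphi\otimes\psi)_n(u)\|_{\vee_p}\leq\|u\|_{\vee_p}$, and since $n$ is arbitrary, $\varphi\otimes\psi$ is $p$-completely contractive.

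I do not expect a serious obstacle here; the argument is essentially a bookkeeping exercise with the formula \eqref{defi of injtp}. The one point requiring mild care is the passage from the algebraic tensor product to its completion: one must check that the extension $\varphi\otimes\psi\colon X\injtp Y\to X_1\injtp Y_1$ is well defined and still $p$-completely contractive, which follows because a $p$-completely contractive map is in particular bounded (hence uniformly continuous) on each matrix level, so it extends to the completions with the same $pcb$-norm bound. A secondary point worth stating explicitly is that $\varphi'$ does indeed map the unit ball of $M_m(X_1')$ into that of $M_m(X')$; this is immediate from $\|\varphi'f_{st}\|_{M_m(X')}=\|f\circ\varphi\|_{\cbp(X,M_m)}\leq\|\varphi\|_{pcb}\,\|f\|_{M_m(X_1')}\leq\|f\|_{M_m(X_1')}$, using Lemma~\ref{banach dual = operator dual} to identify these matricial dual norms with the $p$-completely bounded norms.
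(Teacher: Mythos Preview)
Your argument is correct and follows essentially the same approach as the paper: both proofs use the defining formula \eqref{defi of injtp}, precompose test functionals $f=[f_{st}]\in M_m(X_1')_1$ with $\varphi$ to land in $M_m(X')_1$, and control the $Y$-side via the contractivity of $\psi_{mn}$. The only cosmetic difference is that the paper first factors $\varphi\otimes\psi=(id_{X_1}\otimes\psi)\circ(\varphi\otimes id_Y)$ and handles the two factors separately, whereas you carry out both estimates in a single displayed inequality; the computations themselves are identical.
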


\begin{proof}
Since $\varphi\otimes \psi=(id_{X_1}\otimes \psi)\circ(\varphi\otimes id_Y)$, it suffices to show that $\varphi\otimes id_Y$ and $id_{X_1}\otimes \psi$ extend to  $p$-complete contractions. Let $u=[u_{ij}]\in M_n(X\otimes Y)$. Let us write $u_{ij}=\sum_k^{N_{ij}}x_k^{ij}\otimes y_k^{ij}$ for each $u_{ij}$. Since
$$(\varphi\otimes id_Y)_n(u)=\left[ \sum_k^{N_{ij}} \varphi(x_k^{ij})\otimes y_k^{ij} \right]\in M_n(X_1\otimes Y),$$ from (\ref{defi of injtp}) it follows that
$$\|(\varphi\otimes id_Y)_n(u)\|_{\vee_p}=\sup\left\{\left\|\left[ \sum_{k=1}^{N_{ij}} g_{st}(\varphi(x_k^{ij}))y_k^{ij}\right]\right\|_{M_{mn}(Y)}:m\in \mathbb{N}, g=[g_{st}]\in M_m(X_1')_1\right\}.$$
Define $h_{st}=g_{st}\circ \varphi$ for $1\leq s, t\leq m$, then $h=[h_{st}]=g\circ \varphi \in M_m(X')_1$ and we have
$$\|(\varphi\otimes id_Y)_n(u)\|_{\vee_p}\leq \|u\|_{\vee_p}.$$
To show that $id_{X_1}\otimes \psi$ is also $p$-completely contractive, let $v=[v_{ij}]\in M_n(X_1\otimes Y)$. Writing $v_{ij}=\sum_k^{N_{ij}}w_k^{ij}\otimes y_k^{ij}$, we have
$$    \|v\|_{\vee_p}=\sup\left\{\left\|\left[ \sum_{k=1}^{N_{ij}} f_{st}(w_k^{ij})y_k^{ij}\right]\right\|_{M_{mn}(Y)}:m\in \mathbb{N}, f=[f_{st}]\in M_m(X_1')_1\right\}.
$$
On the other hand,
\begin{align}
\begin{split}
\|(id_{X_1}\otimes \psi)_n(v)\|_{\vee_p}=&\sup\left\{\left\|\left[ \sum_{k=1}^{N_{ij}} f_{st}(w_k^{ij})\psi(y_k^{ij})\right]\right\|_{M_{mn}(Y_1)}:m\in \mathbb{N}, f=[f_{st}]\in M_m(X_1')_1\right\}\\
=&\sup\left\{\left\|\psi_{mn}\left(\left[ \sum_{k=1}^{N_{ij}} f_{st}(w_k^{ij})y_k^{ij}\right]\right)\right\|_{M_{mn}(Y_1)}:m\in \mathbb{N}, f=[f_{st}]\in M_m(X_1')_1\right\}\\
\leq&\|\psi\|_{pcb} \|v\|_{\vee_p}.
\end{split}
\end{align}
\end{proof}

\begin{remark}\label{injective tensor norm description}\quad
\begin{enumerate}
\item By definition of the Banach space injective tensor product $\overset{\epsilon}{\otimes}$, we have
    $$\|u\|_{\epsilon} = \|u\|_{\mathcal{B}(X',Y)} \leq \|u\|_{\cbp(X',Y)} = \|u\|_{\vee_p}$$ for every $u\in X\otimes Y$.
\item \label{part b} Let $u\in \mathbb{M}_n(X\otimes Y)$. If $Y \subseteq \mathcal{B}(L_p(\nu))$ for some measure $\nu$, then by Definition \ref{definition of itp} and \cite[Theorem 4.3, Proposition 4.4]{Daws}
\begin{eqnarray*}
\|u\|_{\vee_p}&=&\sup\{\|\psi(\varphi_{st}(u_{ij}))\|_{M_{rmn}}:m,k\in \mathbb{N}, \varphi=[\varphi_{st}]\in M_m(X')_1,\psi\in M_k(Y')_1\}\\
&=&\sup\{\|(\varphi\otimes \psi)_n(u)\|:m,k\in\mathbb{N}, \varphi\in M_m(X')_1, \psi\in M_k(Y')_1 \}.
\end{eqnarray*}
\item \label{injtp is comm} Let $F:X\otimes Y\to Y\otimes X$ denote the ``flip", that is, $F(\sum x_i\otimes y_i)=\sum y_i\otimes x_i$. If $Y \subseteq \mathcal{B}(L_p(\nu))$ for some measure $\nu$, then by (\ref{part b}) above, for every $u\in \mathbb{M}_n(X\otimes Y)$, we get
$$\|u\|_{\vee_p}=\sup\{\|(\varphi\otimes \psi)_n(u)\|:m,k\in\mathbb{N}, \varphi\in M_m(X')_1, \psi\in M_k(Y')_1 \}.
$$
On the other hand, if $X\subseteq \mathcal{B}(L_p(\mu))$ for some measure $\mu$ as well, then
$$\|F_n(u)\|_{\vee_p}=\sup\{\|(\psi\otimes\varphi)_n(F_n(u))\|:m,k\in\mathbb{N}, \varphi\in M_m(X')_1, \psi\in M_k(Y')_1 \}$$
and it follows that $X\injtp Y= Y\injtp X$ $p$-completely isometrically.
\item \label{MrMs equals Mrs} $M_r\injtp M_s$ is $p$-completely isometrically isomorphic to $M_{rs}$. This follows immediately from \cite[Theorem 3.2]{ALR}.
\end{enumerate}
\end{remark}

At this moment, we do not know whether the $p$-operator space injective tensor product is injective, that is, if $u:X\to \tilde{X}$ and $v:Y\to \tilde{Y}$ are $p$-completely isometric injections, then we do not know whether $u\otimes v$ always extends to a $p$-completely isometric injection $u\otimes v:X\injtp Y\to \tilde{X}\injtp \tilde{Y}$. But if we assume that all the $p$-operator spaces under consideration are on $L_p$ space, then we can show that $u\otimes v:X\injtp Y\to \tilde{X}\injtp \tilde{Y}$ is a $p$-complete isometry as in the following proposition. This fact supports that the terminology $p$-injective tensor product is still reasonable.

\begin{prop} \label{inj is inj provided Lp} Let $\mu_1,\mu_2$ be measures. For $i=1,2$, suppose $X_i\subseteq Y_i \subseteq \mathcal{B}(L_p(\mu_i))$. Then
$$X_1\injtp X_2 \subseteq Y_1\injtp Y_2$$ $p$-completely isometrically.
\end{prop}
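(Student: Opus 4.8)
The plan is to reduce the statement to the concrete description of the $p$-injective tensor norm available for $p$-operator spaces on $L_p$ space, namely the formula in Remark \ref{injective tensor norm description}(\ref{part b}). Since $X_i\subseteq Y_i\subseteq\mathcal B(L_p(\mu_i))$, all four spaces are $p$-operator spaces on $L_p$ space, so by Proposition \ref{Daws characterization} their canonical inclusions into the bidual are $p$-complete isometries, and the formula applies both to $X_1\injtp X_2$ and to $Y_1\injtp Y_2$. Fix $u\in\mathbb M_n(X_1\otimes X_2)$; viewing it inside $\mathbb M_n(Y_1\otimes Y_2)$ we must show its two norms agree. The inclusion $\|u\|_{Y_1\injtp Y_2}\le\|u\|_{X_1\injtp X_2}$ is the easy direction and follows from Proposition \ref{injtp is contractive} applied to the inclusion maps $X_i\hookrightarrow Y_i$, which are $p$-complete contractions (indeed $p$-complete isometries); alternatively it is immediate from the defining formula \eqref{defi of injtp} since restricting functionals on $Y_i$ to $X_i$ only shrinks the supremum.

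The substance is the reverse inequality $\|u\|_{X_1\injtp X_2}\le\|u\|_{Y_1\injtp Y_2}$, i.e. one must produce enough test functionals on $X_1$ and $X_2$ to recover the $X$-norm from the $Y$-norm. Here I would use the formula from Remark \ref{injective tensor norm description}(\ref{part b}): for $u\in\mathbb M_n(X_1\otimes X_2)$,
\[
\|u\|_{X_1\injtp X_2}=\sup\{\|(\varphi\otimes\psi)_n(u)\|:m,k\in\mathbb N,\ \varphi\in M_m(X_1')_1,\ \psi\in M_k(X_2')_1\},
\]
and similarly with $Y_i'$ in place of $X_i'$. Given $\varphi=[\varphi_{st}]\in M_m(X_1')_1$ and $\psi=[\psi_{st}]\in M_k(X_2')_1$, by the Hahn–Banach-type extension theorem for $p$-completely bounded maps into $M_m$ (this is exactly the content of identifying $M_m(X_1')=\cbp(X_1,M_m)$ together with the fact that $M_m$ is $p$-injective, or equivalently \cite[Theorem 4.3, Proposition 4.4]{Daws} as cited in Remark \ref{injective tensor norm description}(\ref{part b})), each $\varphi$ extends to some $\widetilde\varphi\in M_m(Y_1')_1$ and each $\psi$ extends to some $\widetilde\psi\in M_k(Y_2')_1$. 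Since $u$ has all entries in $X_1\otimes X_2$, we have $(\widetilde\varphi\otimes\widetilde\psi)_n(u)=(\varphi\otimes\psi)_n(u)$, so the $X$-supremum is dominated by the $Y$-supremum, giving the reverse inequality. Combining the two inequalities yields that $X_1\otimes X_2\hookrightarrow Y_1\otimes Y_2$ is $p$-completely isometric on the algebraic level, and passing to completions gives $X_1\injtp X_2\subseteq Y_1\injtp Y_2$ $p$-completely isometrically.

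The main obstacle, and the point deserving care, is the extension of matrix-valued functionals: one needs that a $p$-complete contraction $\varphi:X_1\to M_m$ extends to a $p$-complete contraction $\widetilde\varphi:Y_1\to M_m$ of the same norm. This is where the hypothesis that the ambient spaces sit on $L_p$ space (rather than merely in $SQ_p$) is used, via the identification $M_m(X_1')_1=\cbp(X_1,M_m)_1$ and the $p$-injectivity statement in \cite[Theorem 4.3, Proposition 4.4]{Daws}; without such an extension property the formula in Remark \ref{injective tensor norm description}(\ref{part b}) computed over $X_i'$ could a priori be strictly smaller than the one computed over $Y_i'$. Once that extension is in hand the rest is bookkeeping: checking that the two applications of the formula are legitimate (both $X_2$ and $Y_2$ lie in $\mathcal B(L_p(\mu_2))$, so the hypothesis of Remark \ref{injective tensor norm description}(\ref{part b}) holds), and that evaluating an extended functional on an element of the smaller algebraic tensor product agrees with evaluating the original functional.
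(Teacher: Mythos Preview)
Your argument has a genuine gap at the crucial step: the extension of $\varphi\in M_m(X_1')_1=\cbp(X_1,M_m)_1$ to some $\widetilde\varphi\in M_m(Y_1')_1=\cbp(Y_1,M_m)_1$. This is precisely the assertion that $M_m$ is \emph{$p$-injective} in the category of $p$-operator spaces, i.e.\ a $p$-operator space Wittstock extension theorem. For $p=2$ this is Arveson--Wittstock, but for $p\neq 2$ no such result is available; indeed, the paper explicitly remarks just before the proposition that it is not known whether $\injtp$ is injective in general, which would follow immediately if $M_m$ were $p$-injective. The references \cite[Theorem 4.3, Proposition 4.4]{Daws} invoked in Remark~\ref{injective tensor norm description}(\ref{part b}) do not assert this: they are used there only to say that when $Y\subseteq\mathcal B(L_p(\nu))$ the canonical map $\kappa_Y:Y\to Y''$ is a $p$-complete isometry, so that norms in $M_{mn}(Y)$ can be tested by elements of $M_k(Y')$. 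That is a statement about \emph{separating} by matrix functionals, not about \emph{extending} them with control of the $p$-cb norm.

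The paper's proof sidesteps exactly this obstacle. It never changes the \emph{first} variable in the embedding $X_1\injtp X_2\subseteq\cbp(X_1',X_2)$: since the dual slot $X_1'$ is held fixed, the inclusion $\cbp(X_1',X_2)\hookrightarrow\cbp(X_1',Y_2)$ is trivially $p$-completely isometric because $X_2\hookrightarrow Y_2$ is, and no extension of functionals is required. To handle the other variable the paper invokes commutativity of $\injtp$ for $p$-operator spaces on $L_p$ (Remark~\ref{injective tensor norm description}(\ref{injtp is comm})), swapping the roles so that again only the second slot changes. Your approach, by contrast, tries to move both variables at once via the symmetric formula, and that forces you into the unavailable extension theorem. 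If you could supply an independent proof that the restriction map $M_m(Y_1')\to M_m(X_1')$ is a (complete) quotient map under the hypothesis $X_1\subseteq Y_1\subseteq\mathcal B(L_p(\mu_1))$, your route would go through; but that statement is itself at least as hard as the proposition you are trying to prove.
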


\begin{proof}
For $i=1,2$, let $\varphi_i:X_i\hookrightarrow Y_i$ denote the ($p$-completely isometric) inclusion. Since $\varphi_1\otimes \varphi_2=(\varphi_1\otimes id_{Y_2})\circ(id_{X_1}\otimes \varphi_2)$, by Remark \ref{injective tensor norm description} (\ref{injtp is comm}) above, it suffices to show that
$$id_{X_1}\otimes \varphi_2:X_1\injtp X_2 \to X_1\injtp Y_2$$
is $p$-completely isometric. Note that the following diagram commutes:
$$\xymatrix{X_1\injtp X_2 \ar@{^(->}[d] \ar[r]^{id_{X_1}\otimes \varphi_2} & X_1\injtp Y_2 \ar@{^(->}[d] \\
\cbp(X_1',X_2)  \ar@{^(->}[r] & \cbp(X_1',Y_2)
}$$
Since $X_1\injtp X_2\subseteq \cbp(X_1',X_2)$, $X_1\injtp Y_2 \subseteq \cbp(X_1',Y_2)$, and
$\cbp(X_1',X_2)\subseteq \cbp(X_1',Y_2)$ $p$-completely isometrically, we conclude that $id_{X_1}\otimes \varphi_2$ is $p$-completely isometric.
\end{proof}

\section{Conditions $C'_p$, $C''_p$, and $C_p$ for $p$-Operator Spaces}
In this section, we define conditions  $C'_p$, $C''_p$, and $C_p$ for $p$-operator spaces and prove the main result. Throughout the section, $\mu$ and $\nu$ will denote measures.

\begin{lemma}\label{canonical map} Let $V$ and $W$ be $p$-operator spaces. Then the bilinear mapping
$$\tilde{\Psi}:V'\times W'\to (V\itp W)',\qquad (f,g)\mapsto f\otimes g$$ is jointly $p$-completely contractive and hence the canonical mapping $\Psi:V'\ptp W'\to (V\itp W)'$ is $p$-completely contractive.
\end{lemma}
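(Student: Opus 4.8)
The plan is to establish the joint $p$-complete contractivity of $\tilde{\Psi}$ directly; the $p$-complete contractivity of the linearization $\Psi$ is then an immediate consequence, since by Proposition \ref{projtp and dual} (with $Z=(V\itp W)'$) the identification $\cbp(V'\ptp W',Z)=\cbp(V'\times W',Z)$ is $p$-completely isometric and, being canonical, carries $\Psi$ to $\tilde{\Psi}$; at the $M_1$-level this says $\|\Psi\|_{pcb}=\|\tilde{\Psi}\|_{jpcb}$.

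So fix $r,s\in\mathbb{N}$ and $f=[f_{ij}]\in M_r(V')_1$, $g=[g_{kl}]\in M_s(W')_1$; we must check that $\|\tilde{\Psi}_{r;s}(f,g)\|_{M_{rs}((V\itp W)')}\leq 1$. Using Lemma \ref{banach dual = operator dual}, identify $M_r(V')=\cbp(V,M_r)$ and $M_s(W')=\cbp(W,M_s)$, so that $f$ and $g$ become $p$-complete contractions $f\colon V\to M_r$ and $g\colon W\to M_s$. The key point is that, under the identifications $M_{rs}((V\itp W)')=\cbp(V\itp W,M_{rs})$ and $M_{rs}=M_r\itp M_s$ (Remark \ref{injective tensor norm description}(\ref{MrMs equals Mrs})), the matrix $\tilde{\Psi}_{r;s}(f,g)=[f_{ij}\otimes g_{kl}]$ is nothing but the tensor map $f\otimes g\colon V\itp W\to M_r\itp M_s$: evaluating on an elementary tensor $x\otimes y$, the $((i,k),(j,l))$-entry of $\tilde{\Psi}_{r;s}(f,g)(x\otimes y)$ is $f_{ij}(x)g_{kl}(y)$, which is exactly the $((i,k),(j,l))$-entry of the Kronecker product $f(x)\otimes g(y)\in M_r\otimes M_s=M_{rs}$, and $V\otimes W$ is dense in $V\itp W$.

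By Proposition \ref{injtp is contractive}, applied to the $p$-complete contractions $f\colon V\to M_r$ and $g\colon W\to M_s$, the map $f\otimes g$ extends to a $p$-complete contraction $V\itp W\to M_r\itp M_s$; combining this with the preceding identification and the $p$-complete isometry $M_r\itp M_s=M_{rs}$ yields $\|\tilde{\Psi}_{r;s}(f,g)\|_{M_{rs}((V\itp W)')}\leq 1$. Since $r,s$ and $f,g$ were arbitrary, $\|\tilde{\Psi}\|_{jpcb}\leq 1$, as desired. The only point that requires care is the bookkeeping in the ``key point'' above, namely verifying that the matrix amplification $\tilde{\Psi}_{r;s}$, read through the standard mapping-space identifications, genuinely coincides with the tensor map $f\otimes g$ landing in the Kronecker product $M_r\otimes M_s$; once this is in hand the result is a direct appeal to Proposition \ref{injtp is contractive} together with $M_r\itp M_s\cong M_{rs}$, and no ``on $L_p$ space'' hypothesis on $V$ or $W$ is needed.
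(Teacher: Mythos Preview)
Your proof is correct and follows essentially the same approach as the paper: identify $f\in M_r(V')$ and $g\in M_s(W')$ with $p$-complete contractions into $M_r$ and $M_s$, recognize $\tilde{\Psi}_{r;s}(f,g)$ as the tensor map $f\otimes g\colon V\itp W\to M_r\itp M_s\cong M_{rs}$, and conclude via Proposition~\ref{injtp is contractive} and Remark~\ref{injective tensor norm description}(\ref{MrMs equals Mrs}). The only additional content you supply is the explicit invocation of Proposition~\ref{projtp and dual} to pass from $\tilde{\Psi}$ to $\Psi$, which the paper leaves implicit.
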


\begin{proof} We identify $[f_{ij}]\in M_r(V')$ with an operator $F\in
\cbp(V,M_r)$, and likewise $[g_{kl}]\in M_s(W')$ with $G\in \cbp(W,M_s)$. We
have the identification $M_{rs}((V\injtp W)')=\cbp(V\injtp W,
M_{rs})$. Let $H$ be the map $[f_{ij}\otimes g_{kl}]:V\injtp W\to
M_{rs}$. Then by Proposition \ref{injtp is contractive} and Remark \ref{injective tensor norm description} (\ref{MrMs equals Mrs}) we have the commutative diagram
$$\xymatrix{V\injtp W \ar[d]_{F\otimes G} \ar[r]^{H} & M_{rs} \\
M_r\injtp M_s  \ar[ur]_\cong }$$
with $\|F\otimes G\|_{pcb}\leq\|F\|_{pcb}\|G\|_{pcb}$, and it follows that $\|[f_{ij}\otimes g_{kl}]\|=\|H\|_{pcb}\leq
\|F\otimes G\|_{pcb}\leq\|F\|_{pcb}\|G\|_{pcb}$ as required.
\end{proof}


\begin{lemma} Let $V$ and $W$ be $p$-operator spaces. Then $\|\cdot\|_{\vee_p}$ is a subcross
matrix norm. In particular, for every $u\in M_n(V\otimes W)$, we
have $\|u\|_{\vee_p}\leq \|u\|_{\wedge_p}$.
\end{lemma}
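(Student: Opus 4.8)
The plan is to treat the two assertions separately. That $\|\cdot\|_{\vee_p}$ is a $p$-operator space matrix norm, i.e.\ that it satisfies Ruan's axioms $\mathcal{D}_\infty$ and $\mathcal{M}_p$, is immediate from Definition \ref{definition of itp}: $V\injtp W$ is by construction a subspace of the $p$-operator space $\cbp(V',W)$, and a subspace of a $p$-operator space is again a $p$-operator space. So the content lies in the subcross inequality $\|x\otimes y\|_{\vee_p}\le\|x\|_r\|y\|_s$ for $x=[x_{ij}]\in M_r(V)$ and $y=[y_{kl}]\in M_s(W)$, where $x\otimes y$ is viewed inside $M_{rs}(V\otimes W)$ in the usual way; the ``in particular'' clause will then follow formally.

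For the subcross inequality I would argue directly from the defining formula (\ref{defi of injtp}). Fix $m\in\mathbb N$ and $f=[f_{st}]\in M_m(V')_1$, and let $F\in\cbp(V,M_m)$ with $\|F\|_{pcb}\le 1$ be the map corresponding to $f$. By (\ref{defi of injtp}), $\|x\otimes y\|_{\vee_p}$ is the supremum over all such $m,f$ of the matrix-over-$W$ norm of $[\,f_{st}(x_{ij})\,y_{kl}\,]$. After regrouping indices one sees that $[f_{st}(x_{ij})]=F_r(x)$ is a scalar matrix with $\|F_r(x)\|\le\|F\|_{pcb}\|x\|_r\le\|x\|_r$, and that $[\,f_{st}(x_{ij})\,y_{kl}\,]$ agrees, up to a permutation of the coordinates of the ambient $\ell_p$ space (an isometry, hence inducing a $p$-complete isometry on the matrix spaces in play), with the ``tensor product'' $F_r(x)\otimes y$. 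Writing $F_r(x)\otimes y=(F_r(x)\otimes I)(I\otimes y)$ and applying axiom $\mathcal{M}_p$ to the scalar matrix $F_r(x)\otimes I$ together with axiom $\mathcal{D}_\infty$ to the block-diagonal matrix $I\otimes y$ gives $\|F_r(x)\otimes y\|\le\|F_r(x)\|\,\|y\|_s\le\|x\|_r\|y\|_s$. Taking the supremum over $m$ and $f$ yields $\|x\otimes y\|_{\vee_p}\le\|x\|_r\|y\|_s$.

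Finally, recall that $\|\cdot\|_{\wedge_p}$ is the \emph{largest} subcross $p$-operator space norm on $V\otimes W$ \cite[Proposition 4.8]{Daws}. Having just shown that $\|\cdot\|_{\vee_p}$ is a subcross $p$-operator space matrix norm on $V\otimes W$, we conclude that $\|u\|_{\vee_p}\le\|u\|_{\wedge_p}$ for every $u\in M_n(V\otimes W)$.

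I expect the only point requiring real care to be the index bookkeeping in (\ref{defi of injtp}): correctly identifying the triply indexed matrix $[f_{st}(x_{ij})y_{kl}]$ with $F_r(x)\otimes y$, and noting that the coordinate permutation needed to do so is an $\ell_p$-isometry and hence a $p$-complete isometry on the relevant matrix spaces. The norm estimate itself is a one-line consequence of Ruan's axioms, so no serious obstacle is anticipated.
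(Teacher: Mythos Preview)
Your proof is correct and follows essentially the same approach as the paper's. Both arguments reduce the subcross inequality to the estimate $\|\alpha\otimes w\|\le\|\alpha\|\,\|w\|$ for a scalar matrix $\alpha$ (your $F_r(x)$, the paper's $\llangle f,v\rrangle$) by writing $\alpha\otimes w=(\alpha\otimes I)(I\otimes w)$ and invoking axioms $\mathcal{M}_p$ and $\mathcal{D}_\infty$; the paper simply phrases the computation through the operator $T\in\cbp(V',M_{rq}(W))$ induced by $v\otimes w$ rather than through formula~(\ref{defi of injtp}), and is slightly less explicit about the index permutation and about deducing the ``in particular'' clause from \cite[Proposition~4.8]{Daws}.
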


\begin{proof}
Just to fix notation, we identify $M_r(V)\otimes M_q(W)$ with $M_{rq}(V\otimes W)$ by $(v_{ij})\otimes (w_{kl})\mapsto (v_{ij}\otimes w_{kl})_{(i,k),(j,l)}$ where we have the ordering $(1,1)\leq (1,2)\leq \cdots \leq (1,q)\leq (2,1)\leq \cdots \leq (r,q)$. Hence $I_r\otimes w \in M_r\otimes M_q(W)=M_{rq}(W)$ is identified with a  block matrix in $M_r(M_q(W))$ which has $r$ copies of $w$ down the diagonal and $0$ elsewhere. Applying axiom $\mathcal{D}_\infty$ repeatedly hence shows that $\|I_r\otimes w\|_{rq}=\|w\|_q$. Then, for $\alpha\in M_r$, the matrix $\alpha\otimes w \in M_r\otimes M_q(W)=M_{rq}(W)$ is the product $(\alpha\otimes I_q)(I_r\otimes w)$ which has norm at most $\|\alpha\|_r\|w\|_q$ by axiom $\mathcal{M}_p$. Now let $v\in M_r(V)$ and $w\in M_q(W)$, and consider $v\otimes w\in M_{rq}(V\injtp W)$. This tensor induces the operator $T\in \cbp(V', M_{rq}(W))$ given by $T(f)=(f(v_{ij})w_{kl})_{(i,k),(j,l)}=(f(v_{ij}))\otimes w$. For $f=(f_{ab})\in M_n(V')$, we see that $T_n(f)=\llangle f, v \rrangle \otimes w\in M_{nrq}(W)$, which by the previous paragraph has norm at most $\|\llangle f, v \rrangle\|_{nr}\|w\|_q\leq \|f\|_n\|v\|_r\|w\|_q$. Hence $\|T\|_{pcb}\leq \|v\|_r\|w\|_q$ as required.
\end{proof}


Let $V$ and $W$ be $p$-operator spaces and fix $\varphi\in(V\itp W)'$. For $v_0\in V$,
we define a bounded linear functional $_{v_0}\varphi$ on $W$ by
$$_{v_0}\varphi(w)=\varphi(v_0\otimes w), \qquad w\in W.$$

In general, when $v_0=[v_{ij}]\in M_r(V)$ and
$\varphi=[\varphi_{kl}]\in M_n((V\itp W)')$, we define
$_{v_{0}}\varphi = [_{v_{ij}}\varphi_{kl}] \in M_{rn}(W')$. Similarly, for $w_0\in W$, we define $\varphi_{w_0}\in V'$ by
$$\varphi_{w_0}(v)=\varphi(v\otimes w_0), \qquad v\in V.$$

As in $_{v_{0}}\varphi$ above, we can extend the definition of
$\varphi_{w_0}$ for $w_0\in M_r(W)$ and $\varphi\in M_n((V\itp W)')$.
Define a linear map $\Phi_{V,W}^R:V\otimes W{''}\to (V\itp W){''}$
by
$$\Phi_{V,W}^R(v\otimes w{''})(\varphi)=\langle _v\varphi, w{''} \rangle_{W',W{''}}, \qquad v\in V, \quad w{''}\in W{''}, \quad \varphi \in (V\itp W)'.$$

Similarly, define a linear map $\Phi_{V,W}^L:V{''}\otimes W\to
(V\itp W){''}$ by
$$\Phi_{V,W}^L(v{''}\otimes w)(\varphi)=\langle \varphi_w, v{''} \rangle_{V',V{''}}, \qquad v{''}\in V{''}, \quad w\in W, \quad \varphi \in (V\itp W)'.$$

\begin{lemma}\label{universal property}
The map $\Phi_{V,W}^R$ (respectively, $\Phi_{V,W}^L$) defined above extends
to a $p$-completely contractive map $\Phi_{V,W}^R:V\ptp W{''}\to
(V\itp W){''}$ (respectively, $\Phi_{V,W}^L:V{''}\ptp W\to (V\itp
W){''}$).
\end{lemma}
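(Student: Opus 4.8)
The plan is to prove the statement for $\Phi_{V,W}^R$; the case of $\Phi_{V,W}^L$ is entirely symmetric (indeed, it can be deduced from the $R$-case by invoking commutativity of $\injtp$ and $\projtp$ as in Remark \ref{banach proj p op proj} and Remark \ref{injective tensor norm description}\,(\ref{injtp is comm}), at least once we are in the $L_p$-space setting; in general one just repeats the argument with the roles of $V$ and $W$ interchanged). By the universal property of the $p$-projective tensor product recorded in Proposition \ref{projtp and dual}, a bilinear map $V \times W'' \to (V\itp W)''$ extends to a $p$-completely contractive map on $V\ptp W''$ if and only if it is jointly $p$-completely contractive. So the first step is to recast $\Phi_{V,W}^R$ as (the linearization of) a bilinear map $\Psi^R : V \times W'' \to (V\itp W)''$, $\Psi^R(v,w'')(\varphi) = \langle {}_v\varphi, w''\rangle$, and reduce the whole lemma to the single inequality $\|\Psi^R\|_{jpcb} \leq 1$, i.e. $\|\Psi^R_{r;n}\| \leq 1$ for all $r,n$.

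Next I would unwind what $\Psi^R_{r;n}$ does on elementary tensors and identify its norm with a testing quantity over the unit ball of $(V\itp W)'$. Fix $v_0 = [v_{ij}] \in M_r(V)_1$ and $w''_0 = [w''_{kl}] \in M_n(W'')_1$; then $\Psi^R_{r;n}(v_0, w''_0)$ is an element of $M_{rn}((V\itp W)'') = \cbp((V\itp W)', M_{rn})$, and its $pcb$-norm is $\sup \| [\langle {}_{v_{ij}}\varphi_{st}, w''_{kl}\rangle] \|_{M_m \otimes M_{rn}}$ where the supremum runs over $m$ and $\varphi = [\varphi_{st}] \in M_m((V\itp W)')_1$. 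The key maneuver is to read $\varphi = [\varphi_{st}] \in M_m((V\itp W)')$ as a map $\Theta \in \cbp(V\itp W, M_m)$ with $\|\Theta\|_{pcb}\le 1$, and then to use Proposition \ref{projtp and dual} in the other direction: $\cbp(V\itp W, M_m) = \cbp\big(V, \cbp(W, M_m)\big)$ (here I am using that $V\itp W$ sits inside $V \ptp W$... more carefully, one should note $\cbp(V\itp W,M_m)$ receives $\cbp(V\ptp W, M_m)$; what one actually wants is that every $\varphi \in (V\itp W)'$ restricted to elementary tensors is controlled, which is exactly what the definition of $\injtp$ via $\cbp(V', W)$ gives). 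Concretely, the assignment $v \mapsto \big(w \mapsto \varphi(v\otimes w)\big) = \big(w \mapsto {}_v\varphi(w)\big)$ defines, for each $\varphi$ in the unit ball, an element of $\cbp(V, W')$ of $pcb$-norm at most $1$ — this is essentially the content of Lemma \ref{canonical map} read contrapositively, together with Proposition \ref{projtp and dual} applied with $Z = \mathbb{C}$. So ${}_{v_0}\varphi = \llangle \varphi, v_0 \rrangle \in M_{rm}(W')$ has norm $\le \|v_0\|_r \le 1$.

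With that in hand, the computation of $\|\Psi^R_{r;n}(v_0,w''_0)\|_{pcb}$ becomes: pair the element ${}_{v_0}\varphi \in M_{rm}(W')_1$ against $w''_0 \in M_n(W'')_1$ using the canonical $M_{rm}(W') \times M_n(W'') \to M_{rmn}$ bracket, whose norm is bounded by the product of the norms by the standard duality $W'' = (W')'$ together with axiom $\mathcal{M}_p$/$\mathcal{D}_\infty$ bookkeeping (this is the same kind of estimate that appears in the proof of the ``subcross'' lemma just above, where $\|\llangle f,v\rrangle\|_{nr} \le \|f\|_n \|v\|_r$ is used). The upshot is $\|[\langle {}_{v_{ij}}\varphi_{st}, w''_{kl}\rangle]\| \le \|{}_{v_0}\varphi\|_{rm}\,\|w''_0\|_n \le 1$, uniformly in $m$ and $\varphi$, hence $\|\Psi^R_{r;n}(v_0,w''_0)\|_{pcb} \le 1$, and since $r,n$ were arbitrary, $\|\Psi^R\|_{jpcb}\le 1$. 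Feeding this into Proposition \ref{projtp and dual} yields the desired $p$-complete contraction $\Phi_{V,W}^R : V\ptp W'' \to (V\itp W)''$, and the symmetric argument (or the commutativity remarks) gives $\Phi_{V,W}^L$.

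The step I expect to be the main obstacle is the careful bookkeeping of the matrix indices in the middle step — in particular, verifying that the identification of $\varphi = [\varphi_{st}] \in M_m((V\itp W)')_1$ with an element of $\cbp(V, \cbp(W, M_m))$, composed with evaluation, really produces ${}_{v_0}\varphi$ as an element of $M_{rm}(W')$ of norm $\le 1$ with the correct ordering of tensor legs, and that the final pairing with $w''_0$ respects the shuffle of indices so that the output genuinely lands in $M_{mrn}$ (equivalently $M_{rn}(M_m)$) with the claimed norm bound. This is conceptually routine given Proposition \ref{projtp and dual}, Lemma \ref{canonical map}, and Remark \ref{injective tensor norm description}\,(\ref{MrMs equals Mrs}), but it is exactly the kind of place where a sign-of-the-permutation-style slip can occur, so I would write it out with the explicit index conventions fixed (as in the preceding lemma's proof) rather than appealing to ``obvious'' naturality.
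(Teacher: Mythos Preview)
Your proposal is correct and follows the same outline as the paper: reduce via Proposition~\ref{projtp and dual} to showing the associated bilinear map $V\times W''\to (V\itp W)''$ is jointly $p$-completely contractive, compute the matrix norm of $\Psi^R_{r;n}(v_0,w_0'')$ by testing against $\varphi\in M_m((V\itp W)')_1$, and bound the resulting pairing by $\|{}_{v_0}\varphi\|_{M_{rm}(W')}\,\|w_0''\|_{M_n(W'')}$.

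The only substantive difference is in how you justify $\|{}_{v_0}\varphi\|\le\|v_0\|\,\|\varphi\|$. The paper does this in one line using the subcross property of $\injtp$ established in the immediately preceding lemma: $\|{}_{v_0}\varphi\|_{M_{rm}(W')}=\sup_{w\in M_s(W)_1}\|\llangle\varphi,v_0\otimes w\rrangle\|\le\|\varphi\|\,\|v_0\|$. Your route via ``Lemma~\ref{canonical map} read contrapositively'' is a slight misstep (that lemma concerns $V'\ptp W'\to(V\itp W)'$, not what you need here), but the parenthetical correction you give---that $(V\itp W)'$ maps $p$-completely contractively into $(V\ptp W)'=\cbp(V,W')$---is exactly the dual formulation of the subcross property and is valid. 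So the argument works; it would just be cleaner to invoke the subcross lemma directly, as the paper does, rather than threading through Proposition~\ref{projtp and dual} a second time.
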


\begin{proof}
Consider the bilinear map $\Phi:V\times W{''}\to (V\itp W){''}$
given by $$(v,w{''})\mapsto (\varphi\mapsto\langle
_v\varphi,w{''}\rangle_{W',W{''}}),$$ 
then we get
$$\Phi_{r;s}:M_r(V)\times M_s(W{''})\to M_{rs}((V\itp W){''}), \qquad ([v_{ij}],[w_{kl}{''}])\mapsto [\Phi(v_{ij},w_{kl}{''})]$$
and
$$\|[\Phi(v_{ij},w_{kl}{''})]|=\sup_n\left\{\|\llangle
\Phi_{r;s}(v,w{''}), \varphi \rrangle\|:\varphi\in
M_n((V\itp W)'), \|\varphi\|\leq 1\right\}.$$ Since $\llangle
\Phi_{r;s}(v,w{''}), \varphi \rrangle =\llangle
_v\varphi,w{''} \rrangle$, we have
$$\|\llangle
\Phi_{r;s}(v,w{''}), \varphi \rrangle \|=\|\llangle
_v\varphi,w{''} \rrangle \|\leq
\|_v\varphi\|_{M_{rn}(W')}\cdot\|w{''}\|_{M_s(W{''})}$$ and the
result follows because $\injtp$ is a subcross matrix norm and hence
$$\begin{array}{rcl}\|_v\varphi\|_{M_{rn}(W')}&=&\sup_m\left\{\|\llangle  _v\varphi, w\rrangle \|_{M_{rnm}}:w\in M_m(W),\|w\|\leq
1\right\} \\
&=& \sup_m\left\{\|\llangle  \varphi, v\otimes
w\rrangle \|_{M_{rnm}}:w\in M_m(W),\|w\|\leq 1\right\}\\
&\leq & \|\varphi\|\cdot\|v\|\\
&\leq & \|v\|.
\end{array}$$
\end{proof}

\begin{remark}\label{alpha in place of inj} Let $\alpha$ be a general subcross matrix norm.
\begin{enumerate}
\item We have a natural $p$-complete contraction $V\projtp W\to V\otimes_\alpha W$ and the adjoint gives a contraction $(V\otimes_\alpha W)'\to \cbp(V,W')\subseteq \mathcal{B}(V,W')$ given by
$$\varphi\mapsto L_\varphi,\quad \langle L_\varphi(v),w \rangle=\varphi(v\otimes w), \quad \varphi\in (V\otimes_\alpha W)'\quad v\in V,\quad w\in W.$$
\item \label{part b} Using the natural $p$-complete contraction $V\projtp W\to V\otimes_\alpha W$, each member in $(V\otimes_\alpha W)'$ can be regarded as a member in $(V\projtp W)'$.
\item \label{part c} We can define $\Phi_{V,W}^R:V\otimes W{''}\to (V\otimes_\alpha W){''}$ and $\Phi_{V,W}^L:V{''}\otimes W\to (V\otimes_\alpha W){''}$ for a general subcross norm $\alpha$ and Lemma \ref{universal property} remains valid if $\injtp$ is replaced by $\otimes_\alpha$.
\end{enumerate}
\end{remark}

Let $\Psi:V'\ptp W'\to (V\itp W)'$ denote the canonical map, and
consider the following commutative diagram
\begin{center}
\begin{minipage}{9cm}
$$\xymatrix{V\otimes W{''} \ar@{=}[d] \ar[r]^{\Phi_{V,W}^R} & (V\itp W){''} \ar[r]^{\Psi'} & (V'\ptp W')' \ar@{=}[d] \\
\mathcal{CB}_{p,F}^\sigma(V',W{''}) \ar@{^{(}->}[rr]^{\iota} &  &
\mathcal{CB}_p(V',W{''}) }$$
\end{minipage},
\end{center}
 where
$\mathcal{CB}_{p,F}^\sigma(V',W{''})$ denotes the space of all
weak$^*$-continuous $p$-completely bounded finite rank maps from
$V'$ to $W{''}$ and $\iota$ denotes the inclusion map. This commutative
diagram shows that $\Phi_{V,W}^R$ is one-to-one, so one can equip
$V\otimes W{''}$ with the $p$-operator space norm inherited from
$(V\itp W){''}$, which will be denoted by, following the notation
in \cite{EffrosRuan}, $V\itp\!:\!W{''}$. We say that $V$ satisfies
\textit{condition} $C'_p$ (or $V$ has \textit{property} $C'_p$) if this
induced norm coincides with the $p$-operator space injective tensor product norm for every $W\subseteq \mathcal{B}(L_p(\nu))$.

Similarly, the following diagram
$$\xymatrix{V{''}\otimes W \ar@{=}[d] \ar[r]^{\Phi_{V,W}^L} & (V\itp W){''} \ar[r]^{\Psi'} & (V'\ptp W')' \ar@{=}[d] \\
\mathcal{CB}_{p,F}^\sigma(W',V{''}) \ar@{^{(}->}[rr]^{\iota} &  &
\mathcal{CB}_p(W',V{''}) }$$ is also commutative, $\Phi_{V,W}^L$
is one-to-one, and one can hence equip $V{''}\otimes W$ with the
$p$-operator space norm inherited from $(V\itp W){''}$, which will
be denoted by $V{''}\!:\!\itp  W$. We say that $V$ satisfies
\textit{condition} $C''_p$ (or $V$ has \textit{property} $C''_p$) if
this induced norm coincides with the injective tensor
product norm for every $W\subseteq \mathcal{B}(L_p(\nu))$.

In order to define \textit{condition} $C_p$ for $p$-operator spaces, we need the natural map from $V{''}\otimes W{''}$ to $(V\injtp W)''$. To do this, let $\alpha$ be a general subcross matrix norm on $V\otimes W$ and consider the diagram
\begin{equation}\label{diagram} \xymatrix{ & (V\ptp W{''}){''} \ar[rd]^{(\Phi^R_{V,W}){''}} & & \\
V{''}\otimes W{''} \ar[ru]^{\Phi^L_{V,W{''}}}
\ar[rd]_{\Phi^R_{V{''},W}} & & (V\otimes_\alpha W)^{''''}
\ar[r]^{P} & (V\otimes_\alpha
W){''}\quad , \\
& (V{''}\ptp W){''} \ar[ru]_{(\Phi^L_{V,W}){''}} & &}\end{equation} where $P$
is the restriction mapping and $(\Phi^R_{V,W}){''}$ and $(\Phi^L_{V,W}){''}$ are from Remark \ref{alpha in place of inj} (\ref{part c}). 

Consider the following $p$-complete
contraction:
$$\begin{CD}(V\ptp W)' \cong \cbp(V,W') @>\text{adj}>> \cbp(W{''},V')\cong (V\ptp W{''})' \end{CD}.$$
For $\varphi \in (V\ptp W)'$, let
$\varphi^{\wedge}\in(V\ptp W{''})'$ denote the image of $\varphi$
under this map. Then we have
$$\varphi^{\wedge}(v\otimes w{''})=\langle _v\varphi, w{''}\rangle_{W',W{''}}=\Phi_{V,W}^R(v\otimes w{''})(\varphi), \qquad v\in V, \quad w{''}\in W{''}.$$
Moreover, $\varphi^{\wedge}$ is weak*-continuous in the second
variable. Similarly, we also consider the $p$-complete contraction
$$\begin{CD}(V\ptp W)' \cong \cbp(W,V') @>\text{adj}>> \cbp(V{''},W') \cong (V{''}\ptp W)' \end{CD}$$
and define ${^\wedge\varphi}$, and then we get that
$${^\wedge\varphi}(v{''}\otimes w)=\langle \varphi_w,v{''}\rangle_{V',V{''}}=\Phi^L_{V,W}(v{''}\otimes w)(\varphi), \qquad v{''}\in V{''}, \quad w\in W, $$ and that ${^\wedge\varphi}$ is weak*-continuous in the first
variable.

\begin{remark} \label{hat of varphi} Let $\alpha$ be a general subcross matrix norm. By Remark \ref{alpha in place of inj} (\ref{part b}), we can still define $\varphi^{\wedge}\in(V\ptp W{''})'$ for any $\varphi \in (V\otimes_\alpha W)'$. Similarly, we can define  ${^\wedge\varphi}\in (V{''}\ptp W)' $ for any $\varphi \in (V\otimes_\alpha W)'$. 
\end{remark}

The next result follows by Remarks \ref{alpha in place of inj} and \ref{hat of varphi}, and the same argument as in the proof of \cite[Theorem 1]{Han}.
\begin{thm} \label{equivalences}
Let $V$ and $W$ be $p$-operator spaces. Let $\alpha$ be a subcross matrix norm on
$V\otimes W$ and denote by $V\otimes_\alpha W$ the resulting normed
space. Then the following are equivalent.
\begin{enumerate}
\item \label{equiv 1} There exists a separately weak*-continuous extension
$$\Phi:V{''}\otimes W{''}\to (V\otimes_\alpha W){''}$$
of the natural inclusion $\iota:V\otimes W\to (V\otimes_\alpha
W){''}$.
\item \label{equiv 2} The following diagram commutes
$$\xymatrix{ & (V\ptp W{''}){''} \ar[rd]^{(\Phi^R_{V,W}){''}} & & \\
V{''}\otimes W{''} \ar[ru]^{\Phi^L_{V,W{''}}}
\ar[rd]_{\Phi^R_{V{''},W}} & & (V\otimes_\alpha W)^{''''}
\ar[r]^{P} & (V\otimes_\alpha
W){''}\quad . \\
& (V{''}\ptp W){''} \ar[ru]_{(\Phi^L_{V,W}){''}} & &}$$
\item \label{equiv 3} For every $\varphi\in (V\otimes_\alpha W)'$, two functionals
$({^\wedge\varphi})^\wedge$ and $^\wedge(\varphi^\wedge)$ coincide
on $V{''}\otimes W{''}$.
\item \label{equiv 4} For every $\varphi\in (V\otimes_\alpha W)'$, $L_\varphi:V\to
W'$ is weakly compact, where $\langle L_\varphi(v),
w\rangle=\varphi(v\otimes w)$, $v\in V$, $w\in W$.
\end{enumerate}
\end{thm}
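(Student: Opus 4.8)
The plan is to carry out the ``diagram chase plus Gantmacher'' argument of \cite[Theorem 1]{Han} in the $p$-operator space setting. Write
$$\Phi_1=P\circ(\Phi^R_{V,W})''\circ\Phi^L_{V,W''},\qquad \Phi_2=P\circ(\Phi^L_{V,W})''\circ\Phi^R_{V'',W}$$
for the two composites in the diagram of \eqref{diagram}; by Lemma \ref{universal property} together with Remark \ref{alpha in place of inj} these are well-defined $p$-completely contractive maps $V''\otimes W''\to(V\otimes_\alpha W)''$. I would first record two preliminary facts. (i) For $v\in V$ the element $\Phi^L_{V,W''}(v\otimes w'')$ is simply the canonical image of $v\otimes w''$ in $(V\projtp W'')''$; since $P$ is a left inverse of the canonical embedding $(V\otimes_\alpha W)''\hookrightarrow(V\otimes_\alpha W)''''$, naturality of that embedding then gives $\Phi_1|_{V\otimes W''}=\Phi^R_{V,W}$, and symmetrically $\Phi_2|_{V''\otimes W}=\Phi^L_{V,W}$; in particular both composites restrict to $\iota$ on $V\otimes W$. (ii) $\Phi_1$ is weak*-continuous in the first variable and $\Phi_2$ in the second: indeed $\Phi^L_{V,W}$ (resp.\ $\Phi^R_{V'',W}$) is weak*-continuous in its bidual slot by the closing assertions of Lemma \ref{universal property}, and passing to second adjoints and then composing with the restriction map $P$ preserves this.

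Granting (i) and (ii), the equivalence (\ref{equiv 1})$\Leftrightarrow$(\ref{equiv 2}) runs as follows. If the diagram commutes, then $\Phi:=\Phi_1=\Phi_2$ is separately weak*-continuous (it is $\Phi_1$ in the first variable and $\Phi_2$ in the second) and extends $\iota$, which is (\ref{equiv 1}). Conversely, let $\Phi$ be any separately weak*-continuous extension of $\iota$. For fixed $w\in W$, the weak*-continuous maps $v''\mapsto\Phi(v''\otimes w)$ and $v''\mapsto\Phi_2(v''\otimes w)=\Phi^L_{V,W}(v''\otimes w)$ agree on $V$, hence on all of $V''$ by weak*-density; then for fixed $v''\in V''$, the weak*-continuous maps $w''\mapsto\Phi(v''\otimes w'')$ and $w''\mapsto\Phi_2(v''\otimes w'')$ agree on $W$, hence on $W''$. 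Thus $\Phi=\Phi_2$ on $V''\otimes W''$, and symmetrically (using $\Phi_1|_{V\otimes W''}=\Phi^R_{V,W}$) $\Phi=\Phi_1$; so $\Phi_1=\Phi_2$ and the diagram commutes.

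For (\ref{equiv 2})$\Leftrightarrow$(\ref{equiv 3}) I would test the identity $\Phi_1=\Phi_2$ against an arbitrary $\varphi\in(V\otimes_\alpha W)'$. Unwinding the adjoints, and using that $(\Phi^R_{V,W})'$ (resp.\ $(\Phi^L_{V,W})'$) sends the canonical image of $\varphi$ in $(V\otimes_\alpha W)'''$ to $\varphi^\wedge$ (resp.\ ${}^\wedge\varphi$) by the displayed formulas preceding Remark \ref{hat of varphi}, one computes
$$\langle\Phi_1(v''\otimes w''),\varphi\rangle={}^\wedge(\varphi^\wedge)(v''\otimes w''),\qquad \langle\Phi_2(v''\otimes w''),\varphi\rangle=({}^\wedge\varphi)^\wedge(v''\otimes w''),$$
so the diagram commutes precisely when, for every $\varphi$, the functionals ${}^\wedge(\varphi^\wedge)$ and $({}^\wedge\varphi)^\wedge$ agree on $V''\otimes W''$, i.e.\ (\ref{equiv 3}). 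For (\ref{equiv 3})$\Leftrightarrow$(\ref{equiv 4}), one identifies ${}_v\varphi=L_\varphi(v)$ and $\varphi_w=L_\varphi'(\kappa_W(w))$, whence a direct computation yields
$${}^\wedge(\varphi^\wedge)(v''\otimes w'')=\langle L_\varphi''(v''),w''\rangle,\qquad ({}^\wedge\varphi)^\wedge(v''\otimes w'')=\langle\kappa_{W'}\kappa_W'(L_\varphi''(v'')),w''\rangle,$$
both pairings being between $W'''$ and $W''$. Since $\kappa_{W'}\kappa_W'$ is the canonical projection of $W'''$ onto $\kappa_{W'}(W')$, these coincide for all $w''$ exactly when $L_\varphi''(v'')\in\kappa_{W'}(W')$; ranging over $v''\in V''$ this says $L_\varphi''(V'')\subseteq\kappa_{W'}(W')$, which by Gantmacher's theorem is precisely weak compactness of $L_\varphi$, and ranging over $\varphi$ gives (\ref{equiv 4}).

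The part needing the most care is the weak*-continuity bookkeeping in (\ref{equiv 1})$\Leftrightarrow$(\ref{equiv 2}): a priori $\Phi_1$ is globally weak*-continuous only in the first variable, so the two-step weak*-density argument must bring in the slice identifications $\Phi_2|_{V''\otimes W}=\Phi^L_{V,W}$ and $\Phi_1|_{V\otimes W''}=\Phi^R_{V,W}$ at exactly the right moment, and one must verify $\Phi_1=\Phi_2$ on all of $V''\otimes W''$ (not merely on the two ``axes'' $V''\otimes W$ and $V\otimes W''$) before concluding that a separately weak*-continuous extension exists. Everything else is insensitive to the value of $p$: the extension lemmas for $\Phi^R$ and $\Phi^L$, the subcross matrix norm estimates, weak*-density of a Banach space in its bidual, and Gantmacher's theorem are all available verbatim, which is exactly why the operator space proof of \cite[Theorem 1]{Han} transfers without change.
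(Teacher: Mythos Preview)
Your proposal is correct and follows exactly the approach the paper intends: the paper's ``proof'' simply invokes Remarks \ref{alpha in place of inj} and \ref{hat of varphi} together with the argument of \cite[Theorem 1]{Han}, and what you have written is precisely that argument spelled out in the $p$-operator space setting. One small citation slip: the weak*-continuity of $\Phi^L_{V,W''}$ in the first variable (and of $\Phi^R_{V'',W}$ in the second) is not literally a ``closing assertion of Lemma \ref{universal property}'' but rather comes from the explicit formulas for $\varphi^\wedge$ and ${}^\wedge\varphi$ displayed just before Remark \ref{hat of varphi}; otherwise the argument is clean.
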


\begin{thm} \label{itp works}Let $V\subseteq \mathcal{B}(L_p(\mu))$ and $W\subseteq \mathcal{B}(L_p(\nu))$. For every $\varphi\in(V\itp W)'$,
$L_\varphi$ is weakly compact, where $L_\varphi$ is as in Theorem \ref{equivalences} (\ref{equiv 4}).
\end{thm}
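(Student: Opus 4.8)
The plan is to exhibit $L_\varphi \colon V \to W'$ as weakly compact by realizing it as a composition that factors through a reflexive (or at least weakly compact) situation coming from the structure of $L_p$ spaces. The key observation is that $V \subseteq \mathcal{B}(L_p(\mu))$ and $W \subseteq \mathcal{B}(L_p(\nu))$, so we can use the concrete description of $\|\cdot\|_{\vee_p}$ from Remark \ref{injective tensor norm description} (\ref{part b}): for $u \in V \otimes W$,
$$\|u\|_{\vee_p} = \sup\{\|(\varphi \otimes \psi)(u)\| : m,k \in \mathbb{N}, \varphi \in M_m(V')_1, \psi \in M_k(W')_1\}.$$
In particular the inclusion $V \itp W \hookrightarrow \mathcal{B}(L_p(\mu)) \itp \mathcal{B}(L_p(\nu))$ is $p$-completely isometric by Proposition \ref{inj is inj provided Lp}, so it suffices to prove the statement when $V = \mathcal{B}(L_p(\mu))$ and $W = \mathcal{B}(L_p(\nu))$; equivalently, I would try to reduce to showing that any $\varphi$ extends (by Hahn--Banach, using that $\itp$ is injective in the $L_p$ setting) to a functional on the larger injective tensor product, and that $L_\varphi$ is weakly compact there.

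First I would set up the factorization. A functional $\varphi \in (V \itp W)'$ is, by definition of the $p$-injective tensor norm, a bounded functional on the completion of $V \otimes W$ inside $\cbp(V', W)$; since the injective norm on $V \otimes W$ dominates the Banach-space injective norm $\epsilon$ (Remark \ref{injective tensor norm description}(1)), $\varphi$ restricts to a bounded functional on $V \otimes_\epsilon W$, hence $L_\varphi \colon V \to W'$ is the operator associated to an element of $(V \otimes_\epsilon W)'$. The classical theory of the Banach injective tensor product identifies $(V \otimes_\epsilon W)'$ with the integral bilinear forms, and an integral operator between Banach spaces is weakly compact whenever it is integral and the target is a dual of a space with suitable properties — but more simply, the key classical fact I would invoke is: an operator from a Banach space into $W' \subseteq \mathcal{B}(L_p(\nu))'$ that factors (as an integral operator does) through an $L_1$-space composed with an $L_\infty$-space, combined with the fact that the relevant intermediate spaces sit inside $L_p$-spaces for $1 < p < \infty$ which are reflexive, forces weak compactness. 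The cleanest route is: $L_\varphi$ integral $\Rightarrow$ $L_\varphi$ factors through the canonical inclusion of some $L_\infty(\mu') \to L_1(\mu')$ against suitable operators, but here the better tool is that $W = \mathcal{B}(L_p(\nu))$ and its predual-type structure together with $1 < p < \infty$ reflexivity makes bounded subsets of $W'$ that arise from $\itp$ relatively weakly compact.

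The cleanest concrete step I would take: use Remark \ref{injective tensor norm description}(\ref{part b}) to write, for $v \in V$, that $L_\varphi(v) \in W'$ is obtained by pairing $\varphi$ against $v \otimes (-)$, and show the image of the unit ball of $V$ is relatively weakly compact by exhibiting it inside a weakly compact set built from the reflexivity of $L_p(\nu)$ (and hence of $S_p(L_p(\nu))$-type spaces). Concretely, since $\varphi \in (V \itp W)'$ and $V \itp W \subseteq \cbp(V', W)$, the functional $\varphi$ is represented through the structure $\cbp(V', W)' $, and one uses that for $L_p$ spaces with $1<p<\infty$ the relevant operator ideal (the one whose norm is $\|\cdot\|_{\vee_p}$) has a dual consisting of operators factoring through reflexive spaces; the adjoint $L_\varphi^* \colon W'' \to V'$ then has range in the weak*-closure which, by reflexivity-type arguments, already lies in $V'$, and by the Gantmacher theorem $L_\varphi$ is weakly compact. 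I expect the main obstacle to be making rigorous the claim that the representing object for $\varphi$ lives in a space forcing $L_\varphi$ to factor through a reflexive space — i.e., identifying $(V \itp W)'$ concretely enough (via $S_p$-spaces or via the ALR/Daws duality theory) to see the reflexive factorization. If a clean duality identification is unavailable, the fallback is a direct argument: approximate $\varphi$ by functionals coming from finite matrices $\varphi = [\varphi_{st}] \in M_m(V')$, $\psi = [\psi_{st}] \in M_k(W')$ as in Remark \ref{injective tensor norm description}(\ref{part b}), observe that each such approximant gives a finite-rank $L_\varphi$, and then show the approximation is in operator norm (not merely pointwise), which would immediately give compactness — hence weak compactness — of $L_\varphi$; establishing that the sup in the $\vee_p$-description is actually attained well enough to give norm approximation is then the real crux.
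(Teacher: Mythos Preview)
Your overall strategy---extend $\varphi$ by Hahn--Banach to a functional on $\mathcal{B}(L_p(\mu))\itp\mathcal{B}(L_p(\nu))$ and then factor $L_\varphi$ through a reflexive space---is exactly what the paper does. But the execution has a genuine gap. Your integral-operator route is based on a reversed duality: from $\|u\|_{\epsilon}\le\|u\|_{\vee_p}$ one gets a contraction $V\itp W\to V\overset{\epsilon}{\otimes}W$, and hence an inclusion $(V\overset{\epsilon}{\otimes}W)'\hookrightarrow (V\itp W)'$, \emph{not} the other way around. So there is no reason a generic $\varphi\in(V\itp W)'$ should define a bounded functional on $V\overset{\epsilon}{\otimes}W$, and the integral-form machinery is unavailable. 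Your fallback---approximating $\varphi$ in norm by tensors $f\otimes g$ coming from the description in Remark~\ref{injective tensor norm description}(\ref{part b})---would prove $L_\varphi$ is norm-compact, which is strictly stronger than what is claimed and almost certainly false in general; in any case you do not establish the needed norm approximation.

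What you are missing is the concrete representation of the extended functional. The paper embeds $V\itp W\hookrightarrow \mathcal{B}(L_p(\mu))\itp\mathcal{B}(L_p(\nu))\hookrightarrow\mathcal{B}(L_p(\mu\times\nu))$ and then applies a GNS-type result (from \cite[Theorem 3.6]{ALR}) to write the Hahn--Banach extension as $\tilde\varphi(\cdot)=\langle\pi(\cdot)\xi,\eta\rangle$ for a unital $p$-completely contractive homomorphism $\pi:\mathcal{B}(L_p(\mu\times\nu))\to\mathcal{B}(L_p(\theta))$ and vectors $\xi\in L_p(\theta)$, $\eta\in L_{p'}(\theta)$. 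From this one reads off maps $R:\mathcal{B}(L_p(\mu))\to L_p(\theta)$, $x\mapsto\pi(x\otimes 1)\xi$, and $S:\mathcal{B}(L_p(\nu))\to L_{p'}(\theta)$, $y\mapsto\pi(1\otimes y)'\eta$, giving $L_\varphi=(\Phi^W)'S'R\,\Phi^V$; since $L_p(\theta)$ is reflexive, $L_\varphi$ is weakly compact. You correctly anticipated that ``ALR/Daws duality theory'' would supply the missing piece, but did not isolate this representation, which is the heart of the argument.
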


\begin{proof}
Without loss of generality, we may assume
$\|\varphi\|(=\|\varphi\|_{pcb})\leq 1$. Let $\Phi^V$ (respectively $\Phi^W$) denote the embedding $\Phi^V:V\hookrightarrow \mathcal{B}(L_p(\mu))$ (respectively, $\Phi^W:W\hookrightarrow \mathcal{B}(L_p(\nu))$). By Proposition \ref{inj is inj provided Lp} and \cite[Theorem 3.2]{ALR}, we have $p$-completely isometric embeddings $$V\itp W\hookrightarrow \mathcal{B}(L_p(\mu))\itp \mathcal{B}(L_p(\nu)) \hookrightarrow \mathcal{B}(L_p(\mu\times \nu)).$$
Consider the diagram below:
$$\xymatrix{V\itp W \ar[rrr]^\varphi \ar@{^{(}->}[d]_{\Phi^V\otimes \Phi^W} & & & \mathbb{C} \\ \mathcal{B}(L_p(\mu))\itp \mathcal{B}(L_p(\nu)) \ar@{^{(}->}[d] & & & \\ \mathcal{B}(L_p(\mu\times \nu)) \ar@{-->}[rrruu]_{\tilde{\varphi}} & & &
}$$ By Hahn-Banach Theorem, $\varphi$ extends to
$\tilde{\varphi}:\mathcal{B}(L_p(\mu\times \nu)) \to \mathbb{C}$. Applying
the same technique as in the proof of \cite[Theorem 3.6]{ALR}, we can find a measure
space $(\Omega,\Sigma,\theta)$ together with two vectors $\xi\in
L_p(\theta)$, $\eta\in L_{p'}(\theta)$, and a unital $p$-completely
contractive homomorphism $\pi:\mathcal{B}(L_p(\mu\times \nu))\to
\mathcal{B}(L_p(\theta))$ such that $\tilde{\varphi}(\cdot)=\langle
\pi(\cdot)\xi,\eta \rangle$.\\
Define $T:\mathcal{B}(L_p(\mu))\to \mathcal{B}(L_p(\nu))'$ by
$$\langle T(x),y\rangle = \tilde{\varphi}(x\otimes y),\qquad x\in \mathcal{B}(L_p(\mu)),\quad y\in \mathcal{B}(L_p(\nu)).$$
Then it is easy to check that the following diagram is commutative:
$$\xymatrix{V\ar[r]^{L_\varphi} \ar@{^{(}->}[d]_{\Phi^V} & W' \\
\mathcal{B}(L_p(\mu))\ar[r]^T & \mathcal{B}(L_p(\nu))'\ar[u]_{(\Phi^W)'} }$$ Define
$R:\mathcal{B}(L_p(\mu))\to L_p(\theta)$ and $S:\mathcal{B}(L_p(\nu))\to
L_{p'}(\theta)$ by
$$R(x)= \pi(x\otimes 1)\xi,\quad x\in \mathcal{B}(L_p(\mu)),\qquad \text{and} \qquad S(y)= (\pi(1\otimes y))'\eta,\quad y\in \mathcal{B}(L_p(\nu)),$$
then the diagram
$$\xymatrix{\mathcal{B}(L_p(\mu))\ar[rr]^T \ar[rd]_R & & \mathcal{B}(L_p(\nu))' \\
& L_p(\theta)\ar[ru]_{S'} & }$$ is commutative, because
$$\langle S'R(x),y\rangle=\langle R(x),S(y) \rangle = \langle \pi(x\otimes 1)\xi,(\pi(1\otimes y))'\eta \rangle=
\langle\pi(x\otimes y)\xi,\eta\rangle=\tilde{\varphi}(x\otimes
y)=\langle T(x),y\rangle.$$ Combining these two commutative
diagrams, we finally have $L_\varphi=(\Phi^W)'S'R\Phi^V$, that is,
$L_\varphi$ is factorized through a reflexive Banach space
$L_p(\theta)$, so $L_\varphi$ is a weakly compact operator
\cite[Propositions 3.5.4 and 3.5.11]{Megginson}.
\end{proof}

\begin{coro} \label{corollary} Let $V,W$ be $p$-operator spaces on $L_p$ space. Then there exists a (necessarily unique) separately weak*-continuous
extension
$$\Phi:V{''}\otimes W{''}\to (V\itp W){''}$$
of the natural inclusion $\iota:V\otimes W\to (V\itp W){''}$.
\end{coro}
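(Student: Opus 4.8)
The plan is to apply Theorem~\ref{equivalences} with $\alpha = \vee_p$, the $p$-operator space injective tensor product norm. That theorem states that for a subcross matrix norm $\alpha$ on $V \otimes W$, the existence of a separately weak*-continuous extension $\Phi: V'' \otimes W'' \to (V \otimes_\alpha W)''$ of the natural inclusion (condition \eqref{equiv 1}) is equivalent to the weak compactness of $L_\varphi: V \to W'$ for every $\varphi \in (V \otimes_\alpha W)'$ (condition \eqref{equiv 4}). Since we have just proved in Theorem~\ref{itp works} that $L_\varphi$ is weakly compact for every $\varphi \in (V \injtp W)'$ when $V \subseteq \mathcal{B}(L_p(\mu))$ and $W \subseteq \mathcal{B}(L_p(\nu))$, i.e.\ when $V$ and $W$ are $p$-operator spaces on $L_p$ space, condition \eqref{equiv 4} holds with $\alpha = \vee_p$. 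Therefore condition \eqref{equiv 1} holds, which is exactly the existence of the desired extension $\Phi: V'' \otimes W'' \to (V \injtp W)''$.

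The only points that need checking before invoking Theorem~\ref{equivalences} are that $\vee_p$ is indeed a subcross matrix norm on $V \otimes W$ — this is precisely the content of the (second) lemma in Section~3, which shows $\|u\|_{\vee_p} \leq \|u\|_{\wedge_p}$ and that $\|\cdot\|_{\vee_p}$ is a subcross matrix norm — and that Theorem~\ref{itp works} applies, which it does since $V, W$ being $p$-operator spaces on $L_p$ space means by definition that $V \subseteq \mathcal{B}(L_p(\mu))$ and $W \subseteq \mathcal{B}(L_p(\nu))$ for suitable measures $\mu, \nu$. So the proof is essentially one line: combine Theorem~\ref{itp works} with the equivalence \eqref{equiv 4}$\Leftrightarrow$\eqref{equiv 1} of Theorem~\ref{equivalences}.

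For uniqueness, the point is that $V \otimes W$ is weak*-dense in $V'' \otimes W''$ in the appropriate sense — more precisely, $V$ is weak*-dense in $V''$ and $W$ is weak*-dense in $W''$ by Goldstine's theorem — and a separately weak*-continuous bilinear map is determined by its restriction to $V \otimes W$: fixing the first variable in $V$ and using weak*-density of $W$ in $W''$ pins down the map on $V \otimes W''$, and then fixing the second variable in $W''$ and using weak*-density of $V$ in $V''$ pins it down on all of $V'' \otimes W''$. Hence the extension, if it exists, is unique, justifying the parenthetical remark in the statement.

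I do not anticipate a genuine obstacle here, since all the substantive work has already been done in Theorem~\ref{itp works} (the factorization-through-a-reflexive-space argument) and in Theorem~\ref{equivalences} (which is quoted as following from the argument in \cite{Han}). The only thing to be careful about is making sure the hypotheses line up: Theorem~\ref{equivalences} requires $\alpha$ to be a subcross matrix norm, and one should cite the lemma establishing that $\vee_p$ has this property; and one should note explicitly that "$p$-operator space on $L_p$ space" unpacks to the hypothesis $V \subseteq \mathcal{B}(L_p(\mu))$, $W \subseteq \mathcal{B}(L_p(\nu))$ needed by Theorem~\ref{itp works}.
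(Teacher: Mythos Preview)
Your proposal is correct and follows exactly the paper's own approach: the paper's proof simply says to combine Theorem~\ref{equivalences} and Theorem~\ref{itp works}, with uniqueness following from separate weak*-continuity. You have merely made explicit the hypothesis-checking (that $\vee_p$ is a subcross matrix norm and that the $L_p$-space hypothesis matches) and spelled out the Goldstine-density argument for uniqueness, but the logical route is identical.
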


\begin{proof}
Combine Theorem \ref{equivalences} and Theorem \ref{itp works}.
Uniqueness follows from separate weak*-continuity.
\end{proof}

Now we are ready to define condition $C_p$ for $p$-operator spaces.
Let $\Phi$ be as in Corollary \ref{corollary}. The following
commutative diagram
$$\xymatrix{V{''}\otimes W{''} \ar@{=}[d] \ar[r]^{\Phi} & (V\itp W){''} \ar[r]^{\Psi'} & (V'\ptp W')' \ar@{=}[d] \\
\mathcal{CB}_{p,F}^\sigma(V',W{''}) \ar@{^{(}->}[rr]^{\iota} &  &
\mathcal{CB}_p(V',W{''}) }$$ shows that $\Phi$ is injective. Thus
we can equip $V{''}\otimes W{''}$ with the $p$-operator space
structure induced by $\Phi$, which will be denoted by
$V{''}\!:\!\itp\!:\!W{''}$. We say that $V\subseteq \mathcal{B}(L_p(\mu))$ satisfies \textit{condition} $C_p$ (or has
\textit{property} $C_p$) if the map $\Phi$ is isometric with respect to
the injective tensor product norm for every $W\subseteq \mathcal{B}(L_p(\nu))$.

\begin{prop}
Suppose that $V\subseteq \mathcal{B}(L_p(\mu))$. Then $V$ satisfies
condition $C_p$ if and only if $V$ satisfies both condition $C'_p$ and
$C''_p$.
\end{prop}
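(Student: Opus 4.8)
The plan is to prove the two implications separately, using the factorization of $\Phi$ through the intermediate spaces $V\itp\!:\!W''$ and $V''\!:\!\itp W''$ (analogously, through $V''\!:\!\itp W$ and $V''\!:\!\itp\!:\!W''$), mirroring the operator space proof in \cite[Lemma 14.2.1]{EffrosRuan} and \cite[Theorem 5]{Han}. Throughout, fix $W\subseteq \mathcal{B}(L_p(\nu))$; note that by Corollary \ref{corollary} the map $\Phi:V''\otimes W''\to(V\itp W)''$ exists and is separately weak*-continuous, and by Proposition \ref{inj is inj provided Lp} all the relevant injective tensor products behave injectively since everything is on $L_p$ space.

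For the easy direction ($C_p\Rightarrow C'_p$ and $C''_p$): the inclusion $V\otimes W''\hookrightarrow V''\otimes W''$ and the inclusion $V''\otimes W\hookrightarrow V''\otimes W''$ are compatible with $\Phi$ and with the maps $\Phi^R_{V,W}$, $\Phi^L_{V,W}$ defining $V\itp\!:\!W''$ and $V''\!:\!\itp W$. Concretely, $\Phi$ restricted to $V\otimes W''$ equals $\Phi^R_{V,W}$ composed with the obvious identifications, and restricted to $V''\otimes W$ equals $\Phi^L_{V,W}$. Hence $V\itp\!:\!W''$ (resp. $V''\!:\!\itp W$) is a subspace of $V''\!:\!\itp\!:\!W''$ $p$-completely isometrically. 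Now if $\Phi$ is $p$-completely isometric onto its image with the injective norm, i.e. $V''\!:\!\itp\!:\!W'' = V''\injtp W''$, then restricting to the subspaces $V\otimes W''$ and $V''\otimes W$ and invoking Proposition \ref{inj is inj provided Lp} (injectivity of $\injtp$ in the $L_p$ setting) gives $V\itp\!:\!W'' = V\injtp W''$ and $V''\!:\!\itp W = V''\injtp W$; since $W$ was arbitrary, $V$ has $C'_p$ and $C''_p$.

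For the harder direction ($C'_p$ and $C''_p \Rightarrow C_p$): the strategy is to factor $\Phi$ in two stages. First I would show that the diagram (\ref{diagram}) and Theorem \ref{equivalences} give $\Phi = P\circ(\Phi^R_{V,W})''\circ\Phi^L_{V,W''}$ (and symmetrically through the lower path). The idea is: condition $C'_p$ says $\Phi^R_{V,W}:V\ptp W''\to(V\itp W)''$ is isometric with injective norm on $V\otimes W''$, i.e. it identifies $V\otimes W''$ with $V\injtp W''$ sitting inside $(V\itp W)''$; taking second duals and using that $\injtp$ is injective on $L_p$ spaces, $(\Phi^R_{V,W})''$ restricted to $(V\injtp W'')''\supseteq V''\injtp W''$ remains isometric for the injective norm. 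Meanwhile condition $C''_p$ applied to the pair $(V,W'')$ — here one needs $W''$ to again be on $L_p$ space, which holds since $\mathcal{B}(L_p(\nu))''$ embeds in some $\mathcal{B}(L_p)$ and the relevant second duals are handled via Proposition \ref{Daws characterization} — controls $\Phi^L_{V,W''}:V''\ptp W''\to(V\itp W'')''$, giving that $V''\otimes W''$ carries the injective norm $V''\injtp W''$ inside $(V\itp W'')''$. Composing, and checking that the restriction map $P$ does not shrink norms on the image (it is a $p$-complete contraction, and the reverse inequality comes from the subcross property together with Lemma \ref{canonical map}, which provides the lower bound $\|\cdot\|\geq$ injective-dual pairing), yields that $\Phi$ is isometric for $V''\injtp W''$.

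The main obstacle I expect is the bookkeeping around second duals: one must be careful that the spaces $W''$, $V''$, etc., are themselves $p$-operator spaces \emph{on $L_p$ space} so that Corollary \ref{corollary}, Proposition \ref{inj is inj provided Lp}, and Remark \ref{injective tensor norm description}(\ref{injtp is comm}) apply at each stage — this uses Proposition \ref{Daws characterization} and the fact that $\mathcal{B}(L_p(\mu))'' \subseteq \mathcal{B}(L_p(\tilde\mu))$ $p$-completely isometrically for a suitable ultrapower-type measure $\tilde\mu$. A secondary subtlety is verifying that the two a priori different factorizations of $\Phi$ (through the upper versus lower path of (\ref{diagram})) agree, which is exactly the content of Theorem \ref{equivalences}\,(\ref{equiv 2})$\Leftrightarrow$(\ref{equiv 1}) and hence automatic once we know $\Phi$ exists; so the real work is purely the norm-equality argument combining the two hypotheses, and the lower-bound half — showing the injective norm is not just an upper bound but actually achieved — is where Lemma \ref{canonical map} and the explicit description in (\ref{defi of injtp}) do the heavy lifting.
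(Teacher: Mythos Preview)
Your proposal is essentially correct and follows the same route as the paper: for $C_p\Rightarrow C'_p,C''_p$ you restrict $\Phi$ to the subspaces $V\otimes W''$ and $V''\otimes W$ and use injectivity of $\injtp$ on $L_p$ spaces, exactly as the paper does; for the converse you factor through $(V\itp W'')''$ using $C''_p$ applied to $W''$ and then through $(V\itp W)''''$ using the second adjoint of the $C'_p$ embedding, which is precisely the paper's argument (and is the upper path of diagram~(\ref{diagram})).

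Two small points where you over-complicate things. First, your worry about the restriction map $P$ ``shrinking norms'' and the appeal to Lemma~\ref{canonical map} for a lower bound are unnecessary: once you have the isometric inclusion $V''\injtp W''\hookrightarrow (V\itp W)''''$, you simply observe that this map factors as $\Phi$ followed by the canonical isometry $(V\itp W)''\hookrightarrow (V\itp W)''''$, which forces $\Phi$ itself to be isometric---no separate lower-bound argument is needed. Second, your justification that $W''$ is on $L_p$ space via ``$\mathcal{B}(L_p(\nu))''\subseteq\mathcal{B}(L_p(\tilde\mu))$ for an ultrapower-type measure'' is not the right reference; the paper uses \cite[Theorem~4.3]{Daws}, which says every dual $p$-operator space is on $L_p$ space, so $W''=(W')'$ automatically qualifies.
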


\begin{proof}
Suppose that $V$ satisfies condition $C_p$ and $W\subseteq \mathcal{B}(L_p(\nu))$. By Proposition \ref{inj is inj provided Lp} and \cite[Theorem 4.3]{Daws}, we have a $p$-completely isometric
embedding $V\itp  W{''} \subseteq V{''}\itp
W{''}$ and the bottom row in the following commutative diagram
$$\xymatrix{V\itp\!:\!W{''}\ar[r]\ar@{^{(}->}[d] &
V\itp W{''}\ar@{^{(}->}[d]\\
V{''}\!:\!\itp\!:\!W{''}\ar[r] & V{''}\itp W{''}
}$$ is isometric. Therefore the top row is also isometric and hence
$V$ satisfies condition $C'_p$. That $V$ satisfies condition $C''_p$ can
be proved using a similar argument.

On the other hand, if $V$ satisfies condition $C''_p$, we get
$$V{''}\itp W{''}=V{''}:\itp \!:\!W{''}\hookrightarrow (V\itp W{''}){''}.$$
If $V$ also satisfies condition $C'_p$, then
$$V\itp W{''}=V\itp \!:\!W{''}\hookrightarrow (V\itp W){''},$$
and hence we have isometric inclusion
$$V{''}\itp W{''}\hookrightarrow (V\itp W){''''}.$$
Since $V{''}\itp W{''}\subset(V\itp W){''}$
and $(V\itp W){''}\hookrightarrow
(V\itp W){''''}$ isometrically, the inclusion
$V{''}\itp W{''}\subseteq (V\itp W){''}$ must
be isometric.
\end{proof}

\section{Acknowledgement}
The author would like to thank the reviewer for his/her valuable comments, especially the ones that led to Remarks \ref{alpha in place of inj} and \ref{hat of varphi}, to improve the quality of the paper.

\bibliography{jungjinthesisbib}
\bibliographystyle{alpha}

\end{document}